\newtheorem{theorem}{Theorem}[section]
\newtheorem{lemma}[theorem]{Lemma}
\newtheorem{proposition}{Proposition}
\newtheorem{definition}[theorem]{Definition}
\newcommand{\N}{\mathbb{N}}
\newcommand{\R}{\mathbb{R}}
\newcommand{\bpsi}{\boldsymbol{\psi}}
\newcommand{\sd}{\, d}
\newcommand{\btau}{{\boldsymbol{\tau}}}
\newcommand{\no}{\mathbf{n}}
\newcommand{\dist}{\operatorname{dist}}
\newcommand{\sdist}{\operatorname{sdist}}
\newcommand{\G}{\Gamma}
\newcommand{\Gmu}[1]{\Gamma_{\mu,#1}}
\newcommand{\Gv}[1]{\Gamma_{S,#1}}
\newcommand{\bv}{\mathbf{v}}
\begin{document}
\title{On a linearized Mullins-Sekerka/Stokes system for two-phase flows}

\author{Helmut Abels\thanks{  \textit{Fakult\"at f\"ur Mathematik,   Universit\"at Regensburg,   93040 Regensburg,   Germany}   \textsf {helmut.abels@ur.de} } \  and Andreas Marquardt\thanks{\textit   {Fakult\"at f\"ur Mathematik,   Universit\"at Regensburg,   93040 Regensburg,   Germany}  } }
\maketitle
\begin{abstract}
   We study a linearized Mullins-Sekerka/Stokes system in a bounded domain with various boundary conditions. This system plays an important role to prove the convergence of a Stokes/Cahn-Hilliard system to its sharp interface limit, which is a Stokes/Mullins-Sekerka system, and to prove solvability of the latter system locally in time. We prove solvability of the linearized system in suitable $L^2$-Sobolev spaces with the aid of a maximal regularity result for non-autonomous abstract linear evolution equations.
\end{abstract}
{\small\noindent
{\textbf {Mathematics Subject Classification (2000):}}
Primary: 76T99; Secondary: 35Q30, 
35Q35, 
35R35,
76D05, 
76D45.}\\
{\textbf {Key words:}} {Two-phase flow, sharp interface limit, Cahn-Hilliard equation, Free boundary problems, Mullins-Sekerka equation,}

\section{Introduction}
We study the following linearized Mullins-Sekerka/Stokes system
\begin{align*}
D_{t,\Gamma}h+\mathbf{b}\cdot\nabla_{\Gamma}h-bh+\tfrac{1}{2}X_{0}^{*}\big((\mathbf{v}^{+}+\mathbf{v}^{-})\cdot\mathbf{n}_{\Gamma_{t}}\big)+\tfrac{1}{2}X_{0}^{*}\big(\big[\partial_{\mathbf{n}_{\Gamma_{t}}}\mu\big]\big) & =g &  & \text{on }\Sigma\times\left(0,T\right),\\
h\left(.,0\right) & =h_{0} &  & \text{in }\Sigma,
\end{align*}
where for every $t\in\left[0,T\right]$, the functions {$\mathbf{v}^{\pm}=\mathbf{v}^{\pm}(x,t)$},
$p^{\pm}=p^{\pm}(x,t)$ and $\mu^{\pm}=\mu^{\pm}(x,t)$
for $(x,t)\in\Omega_{T}^{\pm}$ with $\mathbf{v}^{\pm}\in H^{2}(\Omega^{\pm}(t))^d$,
$p^{\pm}\in H^{1}(\Omega^{\pm}(t))$ and $\mu^{\pm}\in H^{2}(\Omega^{\pm}(t))$
are the unique solutions to 
\begin{align}
\Delta\mu^{\pm} & =a_{1} &  & \text{in }\Omega^{\pm}(t),\label{eq:laplmu}\\
  \mu^{\pm} & = X_{0}^{*,-1}\big(\sigma\Delta_{\Gamma}h\pm a_{2}h\big)+a_{3} &  & \text{on }\Gamma_{t},\\
  \no\cdot \nabla \mu^{-} & =a_{4} &  & \text{on }\Gmu{1},\\
  \mu^{-} & =a_{4} &  & \text{on }\Gmu{2},\\
-\Delta\mathbf{v}^{\pm}+\nabla p^{\pm} & =\mathbf{a}_{1} &  & \text{in }\Omega^{\pm}(t),\label{eq:stokes}\\
\operatorname{div}\mathbf{v}^{\pm} & =0 &  & \text{in }\Omega^{\pm}(t),\\
[\mathbf{v}] & =\mathbf{a}_{2} &  & \text{on }\Gamma_{t},\label{eq:stokesjmp}\\
\left[2D_{s}\mathbf{v}-p\mathbf{I}\right]\mathbf{n}_{\Gamma_{t}} & =X_{0}^{*,-1}\big(\mathbf{a}_{3}h+\mathbf{a}_{4}\Delta_{\Gamma}h+a_{5}\nabla_{\Gamma}h+\mathbf{a}_{5}\big) &  & \text{on }\Gamma_{t},\label{eq:stokesbdry}\\
B_j(\mathbf{v}^{-},p^{-})& =\mathbf{a}_{6} &  & \text{on }\Gv{j}, j=1,2,3.\label{eq:stokesoutbdry}
\end{align}
Here $\Omega\subseteq \R^d$, $d=2,3$, is a bounded domain with smooth boundary, which is the disjoint union of $\Omega^+(t)$, $\Omega^-(t)$ and $\Gamma_t$, where $\Gamma_t=\partial\Omega^+(t)$ is a smoothly evolving $(d-1)$-dimensional orientable hypersurface. We assume that $\Gamma_t\subseteq\Omega$ for all $t\in (0,T)$, i.e., there is no boundary contact and contact angle. Moreover, $\Gamma_t$ is given for $t\in [0,T]$ as well as $a_1,\ldots, a_4$, $\mathbf{a}_1,\ldots, \mathbf{a}_6$ are given for some $T>0$, $\sigma>0$ is the surface tension constant and $D_s\mathbf{v}= \frac12(\nabla \mathbf{v}+\nabla \mathbf{v}^T)$. Furthermore,  
\begin{align*}
\left[g\right]\left(p,t\right) & :=\lim_{h\searrow0}\left[g^+(p+\mathbf{n}_{\Gamma_{t}}(p)h,t)-g^-(p-\mathbf{n}_{\Gamma_{t}}(p)h,t)\right]\text{ for }p\in\Gamma_{t}
\end{align*}
for suitable functions $g^\pm$ and $X_0\colon \Sigma\times [0,T]\to \Gamma:=\bigcup_{t\in [0,T]} \Gamma_t\times \{t\}$ is a suitable diffeomorphism, which is described in Section~\ref{sec:Prelim} below. For $a\colon \Sigma\times [0,T]\to \R^N$, $N\in\N$, we define $X^{\ast,-1}_0 a\colon \Gamma \to \R^N$ by
\begin{equation*}
  (X^{\ast,-1}_0 a)(p,t)= a(X_0^{-1}(p,t))\qquad \text{for all }(p,t)\in \Gamma
\end{equation*}
and for $b\colon \Gamma\to \R^N$, $N\in\N$, we define $X^{\ast}_0 b\colon \Sigma\times [0,T] \to \R^N$ by
\begin{equation*}
  (X^{\ast}_0 b)(s,t)= b(X_0(s,t))\qquad \text{for all }(s,t)\in \Sigma\times [0,T].
\end{equation*}

This system arises in the construction of approximate solutions in the proof of convergence of a Stokes/Cahn-Hilliard system to its sharp interface limit, which is a Stokes/Mullins-Sekerka system, cf.\ \cite{NSCH2}-\cite{NSCH1}. Here $\mathbf{v}^\pm\colon \bigcup_{t\in [0,T]}\Omega^\pm(t)\times \{t\}\to \R^d$ and $p^\pm\colon \bigcup_{t\in [0,T]}\Omega^\pm(t)\times \{t\}\to \R$ are the velocity and pressure incompressible viscous Newtonian fluids filling the domains $\Omega^\pm(t)$ at time $t$, which are separated by the (fluid) interface $\Gamma_t$. Furthermore, $h\colon \Sigma\times [0,T]\to \R$ is a linearized height function that describes the evolution of the interface at a certain order and $\mu^\pm\colon \bigcup_{t\in [0,T]}\Omega^\pm(t)\times \{t\}\to \R$ is a linearized chemical potential related to the fluids in $\Omega^\pm(t)$. If one neglects the terms related to $\mathbf{v}^\pm, p^\pm$, a similar linearized system arises in the study of the sharp interface limit of the Cahn-Hilliard equation, cf.\ \cite{abc}. Moreover, similar systems arise in the construction of strong solutions for a Navier-Stokes/Mullins-Sekerka system locally in time, cf. \cite{abelswilke}.

We consider different kinds of boundary conditions for $\mathbf{v}^-$ and $\mu^-$ simultaneously. More precisely, we assume that
\begin{equation*}
\partial\Omega= \Gmu{1}\cup\Gmu{2}= \Gv{1}\cup \Gv{2}\cup \Gv{3},  
\end{equation*}
where $\Gmu{1}, \Gmu{2}$ and $\Gv{1}, \Gv{2},\Gv{3}$ are disjoint and closed. Moreover, we have
\begin{alignat*}{2}
  B_1(\bv^-,p^-)&= \bv^- &\quad&\text{on }\Gv{1}\\
  (B_2(\bv^-,p^-))_{\btau}&= \left(\left(2D_s \bv^- -p^-\right)\no_{\partial\Omega}\right)_\btau + \alpha_2 \bv_\btau^- &\quad&\text{on }\Gv{2}\\
  \no_{\partial\Omega}\cdot B_2(\bv^-,p^-)&= \no_{\partial\Omega}\cdot \bv^- &\quad&\text{on }\Gv{2}\\
  B_3(\bv^-,p^-)&= \left(2D_s \bv^- -p^-\right)\no_{\partial\Omega} + \alpha_3 \bv^- &\quad&\text{on }\Gv{3},
\end{alignat*}
where $\no_{\partial\Omega}$ denotes the exterior normal on $\partial\Omega$.
To avoid a non-trivial kernel in the following we assume that one of the following cases holds true:
\begin{equation*}
  |\Gv{1}|+ \alpha_2 |\Gv{2}|+\alpha_3|\Gv{3}|>0
\end{equation*}
Then Korn's inequality yields
\begin{equation}\label{eq:Korn}
  \|\bv\|_{H^1(\Omega)}\leq C \left(\|D_s\bv\|_{L^2(\Omega)} + \alpha_2 \|\bv_{\btau}\|_{L^2(\Gv{2})}+ \alpha_3 \|\bv\|_{L^2(\Gv{3})} \right)
\end{equation}
for all $\bv\in H^1(\Omega)^d$ with $\bv|_{\Gv{1}}=0$, $\no_{\partial\Omega}\cdot \bv|_{\Gv{2}}=0$, cf.\ \cite[Corollary 5.9]{korn}. 

The structure of this contribution is as follows: In Section~\ref{sec:Prelim} we summarize some preliminaries on the parametrization of the interface $\Gamma_t$ and non-autonomuous evolution equations.
In Section~\ref{sec:Main} we present and prove our main results on existence and smoothness of solutions to the linearized Mullins-Sekerka system. Finally, in the appendix we prove an auxilliary result on the existence of a pressure.

The results of this paper are extensions of results in the second author's PhD Thesis.

\section{Preliminaries}\label{sec:Prelim}
\subsection{Notation}

Throughout this manuscript we denote by $\xi\in C^{\infty}\left(\mathbb{R}\right)$ a
cut-off function such that
\begin{equation}
\xi(s)=1\text{ if }\left|s\right|\leq\delta,\,\xi(s)=0\text{ if }\left|s\right|>2\delta,\text{ and }0\geq s\xi'(s)\geq-4\ \text{if }\delta\leq\left|s\right|\leq2\delta.\label{eq:cut-off}
\end{equation}

\subsection{Coordinates}\label{subsec:Coordinates}

We parametrize $(\Gamma_t)_{t\in[0,T_0]}$ with the aid of a family of smooth diffeomorphisms $X_0\colon \Sigma\times [0,T_0]\to \Gamma= \bigcup_{t\in [0,T_0]}\Gamma_t\times \{t\}$. Here either $\Sigma\subseteq \R^d$ is a smooth $(d-1)$-dimensional compact, orientable manifold without boundary, where $d\geq 2$ is allowed, or $d=2$ and $\Sigma=\mathbb{T}^1$. We have included the latter case to cover the setting in \cite{nsac,NSCH2,NSCH1}. Moreover, $\no_{\Gamma_t}(x)$ denotes the exterior normal of $\Gamma_t$ in $x$ with respect to $\Omega^-(t)$ and we denote
\begin{equation*}
\no (s,t):= \no_{\Gamma_t}((X_0(s,t))_1)\qquad \text{for all}~ s\in \Sigma, t\in [0,T_0],
\end{equation*}
where $(X_0(s,t))_1\in \R^d$ denote the spatial components of $X_0(s,t)$.
In the following we will need a tubular neighborhood of $\Gamma_t$: For $\delta>0$ sufficiently small,  the orthogonal projection $P_{\Gamma_t}(x)$ of all
\begin{equation*}
x\in \Gamma_t(3\delta) =\{y\in \Omega: \dist(y,\Gamma_t)<3\delta\}
\end{equation*}
is well-defined and smooth. Moreover, we choose $\delta$ so small that $\dist(\partial\Omega,\Gamma_t)>3\delta$ for every $t\in [0,T_0]$. Every $x\in\Gamma_t(3\delta)$ has a unique representation
\begin{equation*}
x=P_{\Gamma_t}(x)+r\no_{\Gamma_t}(P_{\Gamma_t}(x))
\end{equation*}
 where $r=\sdist(\Gamma_t,x)$. Here
\begin{equation*}
  d_{\G}(x,t):=\sdist (\Gamma_t,x)=
  \begin{cases}
    \dist(\Omega^-(t),x) &\text{if } x\not \in \Omega^-(t),\\
    -\dist(\Omega^+(t),x) &\text{if } x \in \Omega^-(t).
  \end{cases}
\end{equation*}
For the following we define for $\delta'\in (0,3\delta]$
\begin{equation*}
  \Gamma(\delta') =\bigcup_{t\in [0,T_0]} \Gamma_t(\delta') \times\{t\}.
\end{equation*}

We introduce new coordinates in  $\Gamma(3\delta)$ which we denote by
\begin{equation*}
  X\colon  (-3\delta, 3\delta)\times \Sigma \times [0,T_0]\mapsto \Gamma(3\delta)~\text{by}~  X(r,s,t):= X_0(s,t)+r\no(s,t),
\end{equation*}
where
\begin{equation*}
  r=\sdist(\Gamma_t,x), \qquad s= (X_{0}^{-1}(P_{\Gamma_t}(x),t))_1=: S(x,t),
\end{equation*}
where $(X_{0}^{-1}(P_{\Gamma_t}(x),t))_1$ denote the components in $\Sigma$ of $X_{0}^{-1}(P_{\Gamma_t}(x),t)$.

 In the case that $h$ is twice continuously differentiable with respect to $s$ and continuously differentiable with respect to $t$,
we introduce the notations 
 \begin{align}
D_{t,\Gamma}h(s,t) & :=\left.\partial_t\left(h(S(x,t),t)\right)\right|_{x=X_0(s,t)},\quad
\nabla_{\Gamma}h(s,t) :=\left.\nabla\left(h(S(x,t),t)\right)\right|_{x=X_0(s,t)},\nonumber \\
\Delta_{\Gamma}h(s,t)& :=\left.\Delta\left(h(S(x,t),t)\right)\right|_{x=X_0(s,t)},\nonumber
 \end{align}
 where $\nabla$ and $\Delta$ act with respect to $x$.
We note that in the case that $d=2$ and $\Sigma=\mathbb{T}^1$ we have
 \begin{align}
D_{t,\Gamma}h(s,t) & =\left(\partial_{t}+\partial_{t}S(X_0(s,t))\cdot \partial_{s}\right)h(s,t),\nonumber \\
\nabla_{\Gamma}h(s,t)& =\nabla S\left(X_0(s,t)\right)\partial_{s}h(s,t),\nonumber \\
\Delta_{\Gamma}h(s,t)& =\Delta S\left(X_0(s,t)\right) \partial_{s}h(s,t)+|\nabla S\left(X_0(s,t)\right)|^2\partial_{s}^2h(s,t).\nonumber
 \end{align}
 as in \cite{nsac,NSCH2, NSCH1}.

\subsection{Maximal Regularity for Non-autonomous Equations}

In order to prove our main result we use of the theory of maximal
regularity for non-autonomous abstract evolution equations. Therefore, we give a short overview of the basic definitions
and results which we will use. These are taken from \cite{arendt}
and all the proofs of the statements can be found in that article. 

In this subsection let $X$ and $D$ be two Banach spaces such that $D$
is continuously and densely embedded in $X$.
\begin{definition}[$L^{p}$-maximal regularity]
 Let $p\in\left(1,\infty\right).$
\begin{enumerate}
\item Let $A\in\mathcal{L}\left(D,X\right)$. Then $A$ has $L^{p}-$\emph{maximal
regularity} and we write $A\in\mathcal{MR}_{p}$ if for some bounded
interval $\left(t_{1},t_{2}\right)\subset\mathbb{R}$ and all $f\in L^{p}\left(t_{1},t_{2};X\right)$
there exists a unique $u\in W^{1,p}\left(t_{1},t_{2};X\right)\cap L^{p}\left(t_{1},t_{2};D\right)$
such that 
\begin{align*}
\partial_{t}u+Au & =f\quad\text{a.e. on }\left(t_{1},t_{2}\right),\\
u\left(t_{1}\right) & =0.
\end{align*}
\item Let $T>0$ and $A:\left[0,T\right]\rightarrow\mathcal{L}\left(D,X\right)$
be a bounded and strongly measurable function. Then $A$ has $L^{p}$-\emph{maximal
regularity} and we write $A\in\mathcal{MR}_{p}\left(0,T\right)$ if
for all $f\in L^{p}\left(0,T;X\right)$ there exists a unique $u\in W^{1,p}\left(0,T;X\right)\cap L^{p}\left(0,T;D\right)$
such that
\begin{align*}
\partial_{t}u+A\left(t\right)u & =f\quad\text{a.e. on }\left(0,T\right),\\
u\left(0\right) & =0.
\end{align*}
\end{enumerate}
\end{definition}

It can be shown that if $A\in\mathcal{MR}_{p}$ for some $p\in\left(1,\infty\right)$
then $A\in\mathcal{MR}_{p}$ for all $p\in\left(1,\infty\right)$.
Hence, we often simply write $A\in\mathcal{MR}$.
\begin{definition}[Relative Continuity]~\\
 We say that $A:\left[0,T\right]\rightarrow\mathcal{L}\left(D,X\right)$
is \emph{relatively continuous} if for each $t\in\left[0,T\right]$
and all $\epsilon>0$ there exist $\delta>0$, $\eta\geq0$ such that
for all $x\in D$ and for all $s\in\left[0,T\right]$ with $\left|s-t\right|\leq\delta$
the inequality 
\[
\left\Vert A(t)x-A(s)x\right\Vert _{X}\leq\epsilon\left\Vert x\right\Vert _{D}+\eta\left\Vert x\right\Vert _{X}
\]
holds.
\end{definition}

\begin{theorem}
\label{maxres} Let $T>0$ and $A:\left[0,T\right]\rightarrow\mathcal{L}\left(D,X\right)$
be a strongly measurable and relatively continuous function. If $A(t)\in\mathcal{MR}$
for all $t\in\left[0,T\right]$, then $A\in\mathcal{MR}_{p}\left(0,t\right)$
for every $0<t\leq T$ and every $p\in\left(1,\infty\right)$.
\end{theorem}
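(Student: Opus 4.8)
The plan is to prove Theorem~\ref{maxres} by a freezing-point perturbation argument followed by a gluing procedure, the classical route for non-autonomous problems. Fix $p\in(1,\infty)$ once and for all; this is no loss, since membership in $\mathcal{MR}_p$ for one exponent is equivalent to membership for all exponents, and likewise for the constant operators $A(t_0)$. I shall use freely the standard facts that an operator in $\mathcal{MR}$ has $L^p$-maximal regularity on \emph{every} bounded interval, with a maximal regularity constant that can be bounded uniformly over all subintervals of $[0,T]$, and that for such an operator the inhomogeneous initial value problem with datum in the trace space $(X,D)_{1-1/p,p}$ has a unique solution in $W^{1,p}(\,\cdot\,;X)\cap L^p(\,\cdot\,;D)$.

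First I would establish a local perturbation lemma. Fix $t_0\in[0,T]$ and a subinterval $(a,b)\subseteq[0,T]$, and set $E_0:=\{u\in W^{1,p}(a,b;X)\cap L^p(a,b;D):u(a)=0\}$. By the above, $L_0\colon E_0\to L^p(a,b;X)$, $L_0u:=\partial_t u+A(t_0)u$, is an isomorphism with $\|L_0^{-1}\|\le M$, where $M=M(t_0)$ is independent of $(a,b)\subseteq[0,T]$. If $B\colon(a,b)\to\mathcal{L}(D,X)$ is strongly measurable and $\|B(t)x\|_X\le\epsilon\|x\|_D+\eta\|x\|_X$ for all $x\in D$ and a.e.\ $t$, then the multiplication operator $(\mathcal{B}u)(t):=B(t)u(t)$ satisfies, for $u\in E_0$,
\[
\|\mathcal{B}u\|_{L^p(a,b;X)}\le\epsilon\|u\|_{L^p(a,b;D)}+\eta\|u\|_{L^p(a,b;X)}\le\big(\epsilon+\eta(b-a)\big)\|u\|_{E_0},
\]
where I used $\|u\|_{L^p(a,b;X)}\le(b-a)\|\partial_t u\|_{L^p(a,b;X)}$ for $u\in E_0$, which follows from $u(t)=\int_a^t\partial_s u(s)\,ds$ and Hölder's inequality. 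Hence, whenever $M\big(\epsilon+\eta(b-a)\big)<1$, the operator $I+\mathcal{B}L_0^{-1}$ is invertible on $L^p(a,b;X)$, so $L_0+\mathcal{B}=(I+\mathcal{B}L_0^{-1})L_0$ is an isomorphism $E_0\to L^p(a,b;X)$; that is, $A(t_0)+B\in\mathcal{MR}_p(a,b)$.

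Next I would localise and glue. Applying relative continuity of $A$ at $t_0$ with $\epsilon=\tfrac1{2M(t_0)}$ yields $\delta(t_0)>0$ and $\eta(t_0)\ge0$ with $\|(A(t)-A(t_0))x\|_X\le\epsilon\|x\|_D+\eta(t_0)\|x\|_X$ for $|t-t_0|\le\delta(t_0)$; taking $B(t):=A(t)-A(t_0)$ and also requiring $\eta(t_0)(b-a)M(t_0)<\tfrac12$, the perturbation lemma gives $A\in\mathcal{MR}_p(a,b)$ for every subinterval $(a,b)$ of $[t_0-\delta(t_0),t_0+\delta(t_0)]\cap[0,T]$ of length at most some $\ell(t_0)>0$. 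Since $[0,T]$ is compact, finitely many of the intervals $(t_0-\tfrac12\ell(t_0),t_0+\tfrac12\ell(t_0))$ cover it, and a further refinement produces, for any prescribed $t\le T$, a partition $0=s_0<s_1<\cdots<s_N=t$ with $A\in\mathcal{MR}_p(s_{j-1},s_j)$ for all $j$. On each piece the problem is also solvable with prescribed initial datum $u_0\in(X,D)_{1-1/p,p}$: choose $v\in W^{1,p}(s_{j-1},s_j;X)\cap L^p(s_{j-1},s_j;D)$ with $v(s_{j-1})=u_0$, note $\partial_t v+A(\cdot)v\in L^p(s_{j-1},s_j;X)$ since $A$ is bounded and $v\in L^p(\,\cdot\,;D)$, and solve for $w=u-v$ with zero initial value. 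Given $f\in L^p(0,t;X)$ one now solves consecutively on $(s_0,s_1)$ (zero initial value), then on each $(s_{j-1},s_j)$ with the initial value $u(s_{j-1})\in(X,D)_{1-1/p,p}$ furnished by the trace theorem from the previous piece; the concatenation lies in $W^{1,p}(0,t;X)\cap L^p(0,t;D)$ because the traces match at the nodes, solves $\partial_t u+A(t)u=f$ a.e., and satisfies $u(0)=0$. Uniqueness follows by the same induction: a solution with zero data is, on $(s_0,s_1)$, the unique $\mathcal{MR}_p$-solution and hence vanishes there, so it has zero initial value on $(s_1,s_2)$, and so on. This proves $A\in\mathcal{MR}_p(0,t)$ for every $0<t\le T$.

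The delicate point is the bookkeeping in the perturbation step: relative continuity must be used to split $A(t)-A(t_0)$ into a part genuinely small in the $\mathcal{L}(D,X)$-operator norm, which is absorbed by the maximal regularity constant \emph{uniformly in the subinterval}, and a merely bounded part whose contribution is made small only by shrinking the interval; one then has to check that both smallness requirements can be met simultaneously along one and the same finite partition of $[0,T]$, which is exactly what compactness provides. The concatenation argument itself is routine once the trace-space characterisation of $W^{1,p}(\,\cdot\,;X)\cap L^p(\,\cdot\,;D)$ and an associated extension operator $u_0\mapsto v$ are invoked.
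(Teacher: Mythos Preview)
Your argument is correct and is precisely the classical freezing--perturbation--gluing proof; the only mild omission is an explicit remark that relative continuity together with compactness of $[0,T]$ forces $A$ to be bounded in $\mathcal{L}(D,X)$, which you need when you write ``$\partial_t v+A(\cdot)v\in L^p$ since $A$ is bounded''---but this follows immediately from the definition by taking $\epsilon=1$ at each point and covering.

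As for the comparison: the paper does not give its own proof of this theorem at all; it simply cites \cite[Theorem~2.7]{arendt}. Your write-up is in fact a faithful sketch of the argument one finds there, so there is no genuine methodological difference to discuss---you have supplied what the paper outsources.
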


\begin{proof}
See \cite[Theorem 2.7]{arendt}.
\end{proof}
A very important tool for proving maximal regularity properties of
differential operators are perturbation techniques. Employing these can
often help to show maximal regularity for a variety of operators by
separating them into a main part (for which maximal regularity can
be readily shown) and a perturbation. 

In the following we give a perturbation result which is key to many
results in the next chapter.
\begin{definition}[Relatively Close]~\\
\label{relclo} Let $Y$ be a Banach space such that 
\[
D\hookrightarrow Y\hookrightarrow X.
\]
We say $Y$ \emph{is close to $X$ compared with $D$}, if for each
$\epsilon>0$ there exists $\eta\geq0$ such that 
\[
\left\Vert x\right\Vert _{Y}\leq\epsilon\left\Vert x\right\Vert _{D}+\eta\left\Vert x\right\Vert _{X}\quad\text{for all }x\in D.
\]
\end{definition}

\begin{proposition}
\label{relres} Let $Y$ be as in Definition \ref{relclo} and let
the inclusion $D\hookrightarrow Y$ be compact. Then $Y$
is close to $X$ compared with $D$.
\end{proposition}

\begin{proof}
See \cite[Example 2.9 (d)]{arendt}.
\end{proof}
\begin{theorem}\label{thm:Perturbation}
\label{perres} Let $T>0$ and $Y$ be a Banach space that is close
to $X$ compared with $D$. Furthermore, let $A\colon \left[0,T\right]\rightarrow\mathcal{L}\left(D,X\right)$
be relatively continuous and $B\colon \left[0,T\right]\rightarrow\mathcal{L}\left(Y,X\right)$
be strongly measurable and bounded. If $A\left(t\right)\in\mathcal{MR}$
for every $t\in\left[0,T\right]$ then $A+B\in\mathcal{MR}_{p}\left(0,T\right)$.
\end{theorem}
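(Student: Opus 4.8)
The plan is to derive the statement directly from the abstract perturbation theorem of Arendt--Chill--Fornaro--Poupaud, i.e.\ from the framework behind Theorem~\ref{maxres}, by checking that the perturbed family $A+B$ still satisfies the hypotheses of that theorem. Concretely, I would first verify that $A(t)+B(t)\in\mathcal{L}(D,X)$ for each $t$: since $D\hookrightarrow Y$ continuously, $B(t)\in\mathcal{L}(Y,X)$ restricts to an element of $\mathcal{L}(D,X)$, so the sum is well defined and, because $\|B(\cdot)\|_{\mathcal{L}(Y,X)}$ is bounded, the map $t\mapsto A(t)+B(t)$ is again bounded into $\mathcal{L}(D,X)$ and strongly measurable (strong measurability is preserved under sums of strongly measurable maps).

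The key step is to show that $A+B$ is relatively continuous. Fix $t\in[0,T]$ and $\ep>0$. Write $\|(A(t)+B(t))x-(A(s)+B(s))x\|_X\le \|A(t)x-A(s)x\|_X+\|B(t)x-B(s)x\|_X$. For the first term I use relative continuity of $A$ to get $\delta_1>0$, $\eta_1\ge 0$ with $\|A(t)x-A(s)x\|_X\le \tfrac{\ep}{2}\|x\|_D+\eta_1\|x\|_X$ for $|s-t|\le\delta_1$. For the second term, I bound $\|B(t)x-B(s)x\|_X\le (\|B(t)\|_{\mathcal{L}(Y,X)}+\|B(s)\|_{\mathcal{L}(Y,X)})\|x\|_Y\le 2M\|x\|_Y$, where $M=\sup_{r}\|B(r)\|_{\mathcal{L}(Y,X)}<\infty$; then I invoke the hypothesis that $Y$ is close to $X$ compared with $D$ to find, for the prescribed tolerance $\tfrac{\ep}{4M}$, a constant $\eta_2\ge 0$ with $\|x\|_Y\le \tfrac{\ep}{4M}\|x\|_D+\eta_2\|x\|_X$ for all $x\in D$. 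Combining, $\|B(t)x-B(s)x\|_X\le \tfrac{\ep}{2}\|x\|_D+2M\eta_2\|x\|_X$, and adding the two estimates gives relative continuity of $A+B$ with $\delta=\delta_1$ and $\eta=\eta_1+2M\eta_2$. Note this estimate on the $B$-term holds for all $s$, so no smallness of $|s-t|$ is needed there.

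It remains to check the pointwise condition $A(t)+B(t)\in\mathcal{MR}$ for each fixed $t$. This is the part that genuinely uses the closeness of $Y$ and is the main obstacle: it is a singular (relatively bounded) perturbation result for a single maximal-regularity operator, not just a bounded one in $\mathcal{L}(D,X)$. Here I would cite the standard fact (e.g.\ \cite[Proposition~2.8 / Example~2.9]{arendt}, or the classical perturbation theorem for maximal regularity) that if $A_0\in\mathcal{MR}$ and $P\in\mathcal{L}(Y,X)$ with $Y$ close to $X$ compared with $D$, then $A_0+P\in\mathcal{MR}$: the point is that for autonomous equations on a bounded interval one has an a priori estimate $\|u\|_{W^{1,p}(X)}+\|u\|_{L^p(D)}\le C\|f\|_{L^p(X)}$, the perturbation contributes $\|Pu\|_{L^p(X)}\le \ep\|u\|_{L^p(D)}+\eta\|u\|_{L^p(X)}$, the $\ep$-term is absorbed and the $\eta$-term is handled by a Neumann series / short-time argument iterated over the bounded interval. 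Applying this with $A_0=A(t)$ and $P=B(t)$ yields $A(t)+B(t)\in\mathcal{MR}$ for every $t$. Having verified strong measurability, boundedness, relative continuity, and the pointwise $\mathcal{MR}$ property, Theorem~\ref{maxres} applied to the family $A+B$ gives $A+B\in\mathcal{MR}_p(0,T)$ for every $p\in(1,\infty)$, which is the claim.
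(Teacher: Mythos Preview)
Your argument is correct and is essentially the proof given in \cite[Theorem~2.11]{arendt}: one shows that $A+B$ is again relatively continuous (using boundedness of $B$ and closeness of $Y$), that each $A(t)+B(t)\in\mathcal{MR}$ via the autonomous perturbation result, and then invokes Theorem~\ref{maxres}. Note, however, that the present paper does not give its own proof of this statement at all; the ``proof'' in the paper is simply a reference to \cite[Theorem~2.11]{arendt}. So there is no alternative approach in the paper to compare with---you have supplied the details that the paper outsources. One small cosmetic point: in your closeness step you divide by $M=\sup_r\|B(r)\|_{\mathcal{L}(Y,X)}$, so you should separately dispose of the trivial case $M=0$.
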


\begin{proof}
See \cite[Theorem 2.11]{arendt}.
\end{proof}

\section{Main Results}\label{sec:Main}

\subsection{Parabolic Equations on Evolving Surfaces \label{sec:Parabolic-Equations-on}}

We introduce the space 
\begin{eqnarray}
X_{T} & = & L^{2}\big(0,T;H^{\frac{7}{2}}(\Sigma)\big)\cap H^{1}\big(0,T;H^{\frac{1}{2}}(\Sigma)\big)\label{eq:XT}
\end{eqnarray}
for $T\in(0,\infty) $, where we equip
$X_{T}$ with the norm 
\[
\left\Vert h\right\Vert _{X_{T}}=\left\Vert h\right\Vert _{L^{2}\big(0,T;H^{\frac{7}{2}}(\Sigma)\big)}+\left\Vert h\right\Vert _{H^{1}\big(0,T;H^{\frac{1}{2}}(\Sigma)\big)}+\left\Vert h|_{t=0}\right\Vert _{H^{2}(\Sigma)}.
\]



\begin{theorem}
\label{Max-Reg} Let $T\in\left(0,T_{0}\right]$. Let $\mathbf{b}\colon \Sigma\times[0,T]\rightarrow\mathbb{R}^d$
and $b_{1},b_{2}\colon \Sigma\times[0,T]\rightarrow\mathbb{R}$
be smooth given functions.
 For every $g\in L^{2}\big(0,T;H^{\frac{1}{2}}(\Sigma)\big)$
and $h_{0}\in H^{2}(\Sigma)$, there is a unique
solution $h\in X_{T}$ of 
\begin{align}
D_{t,\Gamma}h+\mathbf{b}\cdot\nabla_{\Gamma}h-b_{1}h+X_{0}^{*}\big(\big[\partial_{\mathbf{n}_{\Gamma_{t}}}\mu\big]\big) & =g &  & \text{ on }\Sigma\times(0,T),\label{eq:hmax}\\
h\left(.,0\right) & =h_{0} &  & \text{ on }\Sigma,\nonumber 
\end{align}
where $\mu|_{\Omega^{\pm}(t)}\in H^{2}(\Omega^{\pm}(t))$,
for $t\in[0,T]$, is determined by \begin{subequations}\label{eq:musystem}
\begin{align}
\Delta\mu^{\pm} & =0 &  & \text{in }\Omega^{\pm}(t),\label{eq:musystem1}\\
\mu^{\pm} & =X_{0}^{*,-1}(\sigma\Delta_{\Gamma}h\pm b_{2}h) &  & \text{on }\Gamma_{t},\label{eq:musystem2}\\
  \no_{\partial\Omega}\cdot \nabla \mu^{-} & =0 &  & \text{on }\Gmu{1},\\
  \mu^{-} & =0 &  & \text{on }\Gmu{2}.\label{eq:musystem3}
\end{align}
\end{subequations}Furthermore, the estimates 
\begin{align}
\sum_{\pm}\left\Vert \mu^{\pm}\right\Vert _{L^{2}\left(0,T;H^{2}\left(\Omega^{\pm}(t)\right)\right)\cap L^{6}\left(0,T;H^{1}\left(\Omega^{\pm}(t)\right)\right)} & \leq C\left\Vert h\right\Vert _{X_{T}},\label{eq:mumaxab}
\end{align}
hold for some constant $C>0$ independent of $\mu$ and $h$.
\end{theorem}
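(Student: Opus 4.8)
The plan is to recast \eqref{eq:hmax} as an abstract non-autonomous evolution equation
\[
\partial_{t}h+A(t)h=g\quad\text{on }(0,T),\qquad h(0)=h_{0},
\]
in the Hilbert spaces $X:=H^{\frac{1}{2}}(\Sigma)$ and $D:=H^{\frac{7}{2}}(\Sigma)$, so that $X_{T}=L^{2}(0,T;D)\cap H^{1}(0,T;X)$ and, by the trace theory for such maximal--regularity classes, $h|_{t=0}$ naturally lies in the real interpolation space $(X,D)_{\frac{1}{2},2}=H^{2}(\Sigma)$, matching the hypothesis $h_{0}\in H^{2}(\Sigma)$. Using $S(X_{0}(s,t),t)=s$, the material derivative splits as $D_{t,\Gamma}h=\partial_{t}h+T(t)h$ with $T(t)$ a first order differential operator on $\Sigma$ with smooth coefficients. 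Writing $\mu=\mu_{h}$ for the solution of the transmission problem \eqref{eq:musystem} and splitting its Dirichlet datum $\sigma\Delta_{\Gamma}h\pm b_{2}h$ into the third order part $\sigma\Delta_{\Gamma}h$ and the (in $h$) first order part $\pm b_{2}h$, one obtains $X_{0}^{*}([\partial_{\mathbf{n}_{\Gamma_{t}}}\mu_{h}])=A_{\mathrm{p}}(t)h+P(t)h$, where $A_{\mathrm{p}}(t)h$ is the pull--back of the jump of the normal derivative of the harmonic transmission extension of $X_{0}^{*,-1}(\sigma\Delta_{\Gamma}h)$ (with the homogeneous conditions on $\Gmu{1},\Gmu{2}$) and $P(t)$ has order at most $1$ in $h$ with smooth coefficients. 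Collecting terms, $A(t)=A_{\mathrm{p}}(t)+B(t)$ with $B(t):=T(t)+\mathbf{b}\cdot\nabla_{\Gamma}-b_{1}+P(t)$.

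The core of the argument is the analysis of the principal part $A_{\mathrm{p}}(t)$. For fixed $t$, the two--sided Dirichlet--to--Neumann operator of the transmission problem on $\Gamma_{t}$ (the sum of the two one--sided DtN operators, the one associated with $\Omega^{-}(t)$ being modified only by a smoothing contribution since $\dist(\partial\Omega,\Gamma_{t})>3\delta$) is a classical, elliptic, positive pseudodifferential operator of order $1$ with principal symbol $2|\xi|_{g_{t}}$, as the half--space computation $[\partial_{\mathbf{n}}\mu]=-2\sqrt{-\Delta}\,(\mu|_{\Gamma_{t}})$ indicates; composing with $\sigma(-\Delta_{\Gamma_{t}})$ and transporting to $\Sigma$ by the diffeomorphism $X_{0}(\cdot,t)$ shows that $A_{\mathrm{p}}(t)$ is a classical elliptic pseudodifferential operator of order $3$ on $\Sigma$ with positive principal symbol $2\sigma|\xi|_{g_{t}}^{3}$, where $g_{t}$ is the metric on $\Sigma$ induced from $\Gamma_{t}$. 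Parameter--ellipticity then gives resolvent bounds off the positive real axis, so that, conjugating with the isomorphism $(1-\Delta_{\Sigma})^{\frac{1}{4}}\colon H^{s}(\Sigma)\to H^{s-\frac{1}{2}}(\Sigma)$ to work in $L^{2}(\Sigma)$, the operator $A_{\mathrm{p}}(t)$ (after adding a bounded shift) is sectorial of angle $<\pi/2$; since we are in a Hilbert space setting, this is equivalent to the generation of a bounded analytic semigroup and hence, by de~Simon's theorem, to $A_{\mathrm{p}}(t)\in\mathcal{MR}$ for every $t\in[0,T]$. Finally, transforming \eqref{eq:musystem} to a fixed reference configuration turns it into an elliptic boundary value problem with coefficients depending smoothly on $t$, so that $t\mapsto A_{\mathrm{p}}(t)\in\mathcal{L}(D,X)$ is norm continuous; in particular it is strongly measurable and relatively continuous. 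I expect verifying the order and positivity of this principal symbol, together with the relative continuity in $t$, to be the main obstacle.

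It then remains to absorb $B(t)$ and to reduce to $h_{0}=0$. With $Y:=H^{2}(\Sigma)$ one has $D\hookrightarrow Y\hookrightarrow X$ and $D\hookrightarrow Y$ compact (as $\Sigma$ is compact), so $Y$ is close to $X$ compared with $D$ by Proposition~\ref{relres}, while $B\colon[0,T]\to\mathcal{L}(Y,X)$ is bounded and strongly measurable, being of order at most $1$ with smooth coefficients. Theorem~\ref{thm:Perturbation} then yields $A=A_{\mathrm{p}}+B\in\mathcal{MR}_{p}(0,T)$ for every $p\in(1,\infty)$, in particular for $p=2$; this provides, for each $\widetilde g\in L^{2}(0,T;X)$, a unique $\widetilde h\in X_{T}$ with $\widetilde h(0)=0$ solving $\partial_{t}\widetilde h+A\widetilde h=\widetilde g$. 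For general $h_{0}\in H^{2}(\Sigma)=(X,D)_{\frac{1}{2},2}$ one picks $h_{*}\in X_{T}$ with $h_{*}|_{t=0}=h_{0}$ and applies this with $\widetilde g:=g-\partial_{t}h_{*}-Ah_{*}\in L^{2}(0,T;X)$ and $\widetilde h:=h-h_{*}$, obtaining the unique solution $h=\widetilde h+h_{*}\in X_{T}$ of \eqref{eq:hmax}. The estimate \eqref{eq:mumaxab} follows from elliptic regularity for the transmission problem: for a.e.\ $t$ the datum $X_{0}^{*,-1}(\sigma\Delta_{\Gamma}h\pm b_{2}h)(\cdot,t)$ lies in $H^{\frac{3}{2}}(\Gamma_{t})$, respectively $H^{\frac{1}{2}}(\Gamma_{t})$, with norm bounded by $\|h(\cdot,t)\|_{H^{7/2}(\Sigma)}$, respectively $\|h(\cdot,t)\|_{H^{5/2}(\Sigma)}$, so that $\|\mu^{\pm}(\cdot,t)\|_{H^{2}(\Omega^{\pm}(t))}\le C\|h(\cdot,t)\|_{H^{7/2}(\Sigma)}$ and $\|\mu^{\pm}(\cdot,t)\|_{H^{1}(\Omega^{\pm}(t))}\le C\|h(\cdot,t)\|_{H^{5/2}(\Sigma)}$; integrating in time and using $X_{T}\hookrightarrow L^{2}(0,T;H^{7/2}(\Sigma))$ together with $X_{T}\hookrightarrow L^{6}(0,T;H^{5/2}(\Sigma))$ (by interpolating $X_{T}\hookrightarrow L^{2}(0,T;H^{7/2}(\Sigma))$ with $X_{T}\hookrightarrow C([0,T];H^{2}(\Sigma))$) then gives \eqref{eq:mumaxab}.
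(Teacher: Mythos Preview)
Your proposal is correct and follows the same overall architecture as the paper: cast \eqref{eq:hmax} as $\partial_t h+A(t)h=g$ with $X=H^{1/2}(\Sigma)$, $D=H^{7/2}(\Sigma)$, isolate a principal part for which $\mathcal{MR}$ holds pointwise in $t$, and absorb the remaining terms as lower-order perturbations via Theorem~\ref{thm:Perturbation}. The final estimate \eqref{eq:mumaxab} is obtained in the same way, via elliptic regularity and the interpolation embedding $X_T\hookrightarrow L^6(0,T;H^{5/2}(\Sigma))$.

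The genuine difference lies in how maximal regularity of the principal part is established. The paper defines the Mullins--Sekerka operator $A_0(t_0)$ using the \emph{pure Neumann} extension on $\partial\Omega$, performs a Hanzawa transformation to a fixed reference domain, and then invokes \cite[Corollary~6.6.5]{pruess} as a black box; the mixed boundary condition on $\Gmu{2}$ is then handled separately by showing that the difference $\mathcal{B}_0=B_{t_0}\circ S_{t_0}^\Delta\circ\mathfrak{D}_{t_0}$ maps $H^{5/2}(\Sigma)\to H^{1/2}(\Sigma)$, using an explicit cut-off near $\partial\Omega$. You instead argue directly that the two-sided DtN composed with $\sigma\Delta_\Gamma$ is a classical elliptic $\Psi$DO of order~$3$ with positive principal symbol $2\sigma|\xi|_{g_t}^3$, so that sectoriality plus de~Simon's theorem (Hilbert space setting) gives $\mathcal{MR}$; the effect of the boundary condition on $\partial\Omega$ is absorbed into your ``smoothing contribution'' remark, which is justified since $\dist(\partial\Omega,\Gamma_t)>3\delta$. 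Your route is more self-contained and makes the order and symbol structure explicit, at the cost of requiring some pseudodifferential machinery; the paper's route is shorter but relies on the Pr\"uss--Simonett reference. A minor further difference: the paper takes the intermediate space $Y=H^{5/2}(\Sigma)$ (since its $\mathcal{B}_0$ is genuinely of order~$2$ in~$h$), whereas you can take $Y=H^{2}(\Sigma)$ because your perturbation $B(t)$ is of order at most~$1$; both choices work with Proposition~\ref{relres}.
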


\begin{proof}
We may write (\ref{eq:hmax}) in abstract form as 
\begin{alignat*}{2}
\partial_{t}h+\mathcal{A}(t)h & =g &\qquad & \text{in }\Sigma\times [0,T],\\
h\left(.,0\right) & =h_{0} &  & \text{in }\Sigma,
\end{alignat*}
where $\mathcal{A}(t)\colon  H^{\frac{7}{2}}(\Sigma)\to  H^{\frac{1}{2}}(\Sigma)$ depends on  $t\in[0,T]$.
Now we fix $t_{0}\in [0,T]$ and analyze the operator $\mathcal{A}(t_{0})$,
where we replace $t$ with the fixed $t_{0}$ in all time dependent
coefficients. 

In order to understand this operator we define 
\begin{align*}
\mathfrak{D}_{t_{0}}\colon & H^{\frac{7}{2}}(\Sigma)\rightarrow H^{\frac{3}{2}}(\Gamma_{t_{0}})\colon  h\mapsto\big(X_{0}^{*,-1}(\sigma\Delta_{\Gamma}h)\big)(.,t_{0}),\\
S_{t_{0}}^{N}\colon & H^{\frac{3}{2}}(\Gamma_{t_{0}})\rightarrow H^{2}(\Omega^{+}(t_{0}))\times H^{2}(\Omega^{-}(t_{0}))\colon f\mapsto(\Delta_{N})^{-1}f,\\
B_{t_{0}}\colon & H^{2}(\Omega^{+}(t_{0}))\times H^{2}(\Omega^{-}(t_{0}))\rightarrow H^{\frac{1}{2}}(\Sigma)\colon (\mu^{+},\mu^{-})\mapsto \big(X_{0}^{*}([\nabla\mu\cdot\mathbf{n}_{\Gamma_{t_{0}}}])\big)(.,t_{0}),
\end{align*}
where $(\Delta_{N})^{-1}f$ is the
unique solution $(\mu_{N}^{+},\mu_{N}^{-})$ to \begin{subequations}\label{eq:neumannelliptic}
\begin{align}
\Delta\mu_{N}^{\pm} & =0 &  & \text{in }\Omega^{\pm}(t_{0}),\label{eq:neumannelliptic1}\\
\mu_{N}^{\pm} & =f &  & \text{on }\Gamma_{t_{0}},\label{eq:neumannelliptic2}\\
\nabla\mu_{N}^{-}\cdot\mathbf{n}_{\partial\Omega} & =0 &  & \text{on }\partial\Omega.\label{eq:neumannelliptic3}
\end{align}
\end{subequations}
In the literature the concatenation $B_{t_{0}}\circ S_{t_{0}}^{N}$
is often referred to as the \emph{Dirichlet-to-Neumann} operator and
$A_{0}\left(t_{0}\right):=B_{t_{0}}\circ S_{t_{0}}^{N}\circ\mathfrak{D}_{t_{0}}$
is called the \emph{Mullins-Sekerka} operator. It can be shown that
\[
A_{0}\colon [0,T]\to\mathcal{L}\big(H^{\frac{7}{2}}(\Sigma),H^{\frac{1}{2}}(\Sigma)\big)
\]
has $L^{p}$-maximal regularity, i.e., $A_{0}\in\mathcal{MR}_{p}(0,T)$.
We will not prove this in detail but just give a short sketch describing
the essential ideas: first, a reference surface $\tilde\Sigma\subset\subset\Omega$
is fixed such that $\Gamma_{t}$ can be expressed as a graph over
$\tilde\Sigma$ for $t$ in some time interval $\left[\tilde{t},\tilde{t}+\epsilon\right]\subset\left[0,T\right]$.
e.g.\ one may choose $\tilde\Sigma:=\Gamma_{0}$ and then determine $\epsilon_{0}>0$
such that $\Gamma_{t}$ may be written as graph over $\Gamma_{0}$
for all $t\in\left[0,\epsilon_{0}\right]$, which is possible since
$\Gamma$ is a smoothly evolving hypersurface. Next, a Hanzawa transformation
is applied, enabling us to consider (\ref{eq:neumannelliptic3}) as
a system on fixed domains $\Omega^{\pm}$ and $\tilde\Sigma$, but with
time dependent coefficients (see e.g.\ \cite[Chapter 2.2]{abelswilke} or and \cite[Chapter 4]{stefan}). Here, $\Omega^{+}$, $\Omega^{-}$
and $\tilde\Sigma$ denote disjoint sets such that $\partial\Omega^{+}=\tilde\Sigma$
and $\Omega=\Omega^{+}\cup\Omega^{-}\cup\tilde\Sigma$ holds and we assume
in the following that $t_{0}\in\left[0,\epsilon_{0}\right]$. To be
more specific, the Hanzawa transformation results in a system of the
form 
\begin{align*}
a\left(x,t,\nabla_{x}\right)\bar{\mu}^{\pm} & =0 &  & \text{in }\Omega^{\pm},\\
\bar{\mu}^{\pm} & =\tilde{f} &  & \text{on }\tilde\Sigma,\\
\nabla\bar{\mu}^{-}\cdot\mathbf{n}_{\partial\Omega} & =0 &  & \text{on }\partial\Omega,
\end{align*}
where $a$ is the transformed Laplacian, depending smoothly on $t$
and $\tilde{f}$ is the transformation of $f$. Applying the Hanzawa
transformation (and the diffeomorphism $X_{0}$)
also to the operators $\mathfrak{D}_{t_{0}}$ and $B_{t_{0}}$, we
end up with a transformed operator $\tilde{A}_{0}(t_{0})\in\mathcal{L}\big(H^{\frac{7}{2}}(\Sigma),H^{\frac{1}{2}}(\Sigma)\big)$
and \cite[Corollary 6.6.5]{pruess} implies that $\tilde{A}_{0}(t_{0})$
has $L^{p}$-maximal regularity. As all involved differential operators
and coefficients depend smoothly on $t$, it is possible to show that
$\tilde{A}_{0}\colon \left[0,\epsilon_{0}\right]\to\mathcal{L}\big(H^{\frac{7}{2}}(\Sigma),H^{\frac{1}{2}}(\Sigma)\big)$
is relatively continuous. Therefore Theorem \ref{maxres} implies
$\tilde{A}_{0}\in\mathcal{MR}_{p}(0,\epsilon_{0})$ and,
transforming back, also $A_{0}\in\mathcal{MR}_{p}(0,\epsilon_{0})$.
Repeating this procedure with a new reference surface $\Sigma:=\Gamma_{\epsilon_{0}}$
and iteratively continuing the argumentation, we end up with $A_{0}\in\mathcal{MR}_{p}(0,T)$.


We proceed by showing that $\mathcal{A}(t_{0})=A_{0}(t_{0})+\mathcal{B}(t_{0})$
holds for some lower order perturbation $\mathcal{B}$. We introduce
\[
S_{t_{0}}^{DN}\colon H^{\frac{3}{2}}(\Gamma_{t_{0}})\rightarrow H^{2}(\Omega^{+}(t_{0}))\times H^{2}(\Omega^{-}(t_{0}))\colon f\mapsto (\Delta_{DN})^{-1}f,
\]
where $(\mu_{DN}^{+},\mu_{DN}^{-}):=(\Delta_{DN})^{-1} f$
is the unique solution to (\ref{eq:neumannelliptic}), replacing
$\nabla\mu_{N}^{-}\cdot\mathbf{n}_{\partial\Omega}=0$ on $\Gmu{2}$ by $\mu_{D}^{-}=0$
on $\Gmu{2}$. Moreover, we write $S_{t_{0}}^{\Delta}:=S_{t_{0}}^{DN}-S_{t_{0}}^{N}$
and observe that the equality 
\begin{equation}
B_{t_{0}}\circ S_{t_{0}}^{DN}\circ\mathfrak{D}_{t_{0}}=A_{0}(t_{0})+\mathcal{B}_{0}(t_{0})\label{eq:neumanndirichlet}
\end{equation}
is satisfied, where $\mathcal{B}_{0}(t_{0}):=B_{t_{0}}\circ S_{t_{0}}^{\Delta}\circ\mathfrak{D}_{t_{0}}$.
Let $f\in H^{\frac{3}{2}}(\Gamma_{t_{0}})$ be fixed, $(\mu_{DN}^{+},\mu_{DN}^{-}):=S_{t_{0}}^{DN}f$,
$(\mu_{N}^{+},\mu_{N}^{-}):=S_{t_{0}}^{N}f$
and $\tilde{\mu}^{\pm}:=\mu_{DN}^{\pm}-\mu_{N}^{\pm}$, implying $(\tilde{\mu}^{+},\tilde{\mu}^{-})=S_{t_{0}}^{\Delta}f$.
Then $\tilde{\mu}^{\pm}\in H^{2}(\Omega^{\pm}(t_{0}))$
solves
\begin{alignat*}{2}
\Delta\tilde{\mu}^{\pm} & =0 & \qquad & \text{in }\Omega^{\pm}\left(t_{0}\right),\\
\tilde{\mu}^{\pm} & =0 &  & \text{on }\Gamma_{t_{0}},\\
\no_{\partial\Omega}\cdot \nabla \tilde{\mu}^{-} & =0 &  & \text{on }\Gmu{1}\\
\tilde{\mu}^{-} & =\mu_{N}^{-} &  & \text{on }\Gmu{2}
\end{alignat*}
and elliptic regularity theory implies 
\begin{equation}
\left\Vert \tilde{\mu}^{-}\right\Vert _{H^{2}(\Omega^{-}(t_{0}))}\leq C\Vert \mu_{N}^{-}\Vert _{H^{\frac{3}{2}}(\Gmu{2})}\label{eq:mutildeelliptic}
\end{equation}
 and $\tilde{\mu}^{+}\equiv0$ in $\Omega^{+}(t_{0})$.
For the further argumentation, we show 
\begin{equation}
\left\Vert \mu_{N}^{-}\right\Vert _{H^{\frac{3}{2}}(\partial\Omega)}\leq C\Vert \mu_{N}^{-}\Vert _{H^{\frac{1}{2}}(\Gamma_{t_{0}})}.\label{eq:zwischenstand}
\end{equation}
To this end let $\gamma(x):=\xi(4d_{\mathbf{B}}(x))$
for all $x\in\Omega$, where $\xi$ is a cut-off function satisfying
(\ref{eq:cut-off}). In particular $\text{supp}\gamma\cap\Gamma_{t}=\emptyset$
for all $t\in[0,T_{0}]$ by our assumptions and $\gamma\equiv1$
in $\partial\Omega(\frac{\delta}{4})$. Denoting $\hat{\mu}:=\gamma\mu_{N}^{-}\in H^{2}\left(\Omega^{-}(t_{0})\right)$,
we compute using $\Delta\mu_{N}^{-}=0$ in $\Omega^{-}(t_{0})$
that $\hat{\mu}$ is a solution to
\begin{alignat*}{2}
\Delta\hat{\mu} & =2\nabla\gamma\cdot\nabla\mu_{N}^{-}+(\Delta\gamma)\mu_{N}^{-} &\qquad  & \text{in }\Omega^{-}(t_{0}),\\
\hat{\mu} & =0 &  & \text{on }\Gamma_{t_{0}},\\
\nabla\hat{\mu}\cdot\mathbf{n}_{\partial\Omega} & =0 &  & \text{on }\partial\Omega,
\end{alignat*}
which, again regarding elliptic regularity theory, implies $\left\Vert \hat{\mu}\right\Vert _{H^{2}(\Omega^{-}(t_{0}))}\leq C\Vert \mu_{N}^{-}\Vert _{H^{1}(\Omega^{-}(t_{0}))}$.
This is essential in view of (\ref{eq:zwischenstand}) as it leads
to
\begin{align*}
  \Vert \mu_{N}^{-}\Vert _{H^{\frac{3}{2}}(\partial\Omega)} & =\Vert \hat{\mu}\Vert _{H^{\frac{3}{2}}(\partial\Omega)}\leq C\Vert \hat{\mu}\Vert _{H^{2}(\Omega^{-}(t_{0}))}\\
  &\leq C\Vert \mu_{N}^{-}\Vert _{H^{1}(\Omega^{-}\left(t_{0})\right)} \leq \tilde{C}\Vert \mu_{N}^{-}\Vert _{H^{\frac{1}{2}}(\Gamma_{t_{0}})},
\end{align*}
where we used the continuity of the trace operator $\operatorname{tr}\colon H^{2}(\Omega^{-}(t_{0}))\rightarrow H^{\frac{3}{2}}(\partial\Omega^{-}(t_{0}))$
in the first inequality (cf.\ \cite[Theorem 3.37]{mclean})
and standard estimates for elliptic equations in the second and third
inequality. 

Let now $h\in H^{\frac{7}{2}}(\Sigma)$ and $(\tilde{\mu}^{+},\tilde{\mu}^{-}):=S_{t_{0}}^{\Delta}\circ\mathfrak{D}_{t_{0}} h$.
Our prior considerations enable us to estimate 
\begin{align*}
\left\Vert B_{t_{0}}\circ S_{t_{0}}^{\Delta}\circ\mathfrak{D}_{t_{0}}h\right\Vert _{H^{\frac{1}{2}}(\Sigma)} & \leq C\left\Vert \tilde{\mu}^{-}\right\Vert _{H^{2}(\Omega^{-}(t_{0}))}\le C\Vert \mu_{N}^{-}\Vert _{H^{\frac{3}{2}}(\partial\Omega)}\\
 & \leq C\Vert \mu_{N}^{-}\Vert _{H^{\frac{1}{2}}(\Gamma_{t_{0}})}\leq C\Vert \sigma\Delta_{\Gamma}h\Vert _{H^{\frac{1}{2}}(\Sigma)} \leq C\Vert h\Vert_{H^{\frac{5}{2}}(\Sigma)},
\end{align*}
where we employed the continuity of the trace in the first line, (\ref{eq:mutildeelliptic})
in the second, (\ref{eq:zwischenstand}) in the third and the definition
of $\mu_{N}^{-}$ in the fourth. As $H^{\frac{7}{2}}(\Sigma)$
is dense in $H^{\frac{5}{2}}(\Sigma)$, we may
extend $\mathcal{B}_{0}(t_{0})$ to an operator 
\begin{equation}
\mathcal{B}_{0}(t_{0})\colon H^{\frac{5}{2}}(\Sigma)\rightarrow H^{\frac{1}{2}}(\Sigma),\label{eq:B0}
\end{equation}
which shows in regard to (\ref{eq:neumanndirichlet}) that we may
view $B_{t_{0}}\circ S_{t_{0}}^{\Delta}\circ\mathfrak{D}_{t_{0}}$ as a lower order
perturbation of $A_{0}(t_0)$.

Next we take care of the term involving $b_{2}$ in (\ref{eq:musystem2}).
For this we consider the operator 
\[
\mathcal{B}_{1}(t_0)\colon H^{\frac{7}{2}}(\Sigma)\rightarrow H^{\frac{1}{2}}(\Sigma)\colon h\mapsto X_{0}^{*}\big(\big[\partial_{\mathbf{n}_{\Gamma_{t_{0}}}}\mu_{1}\big]\big),
\]
where $\mu_{1}^{\pm}\in H^{2}(\Omega^{\pm}(t_0))$
is the solution to 
\begin{alignat*}{2}
\Delta\mu_{1}^{\pm} & =0 &\qquad  & \text{in }\Omega^{\pm}(t_0),\\
\mu_{1}^{\pm} & =\pm b_{2}h &  & \text{on }\Gamma_{t_{0}},\\
\no_{\partial\Omega}\cdot \nabla \mu_{1}^{-} & =0 &  & \text{on }\Gmu{1},\\
\mu_{1}^{-} & =0 &  & \text{on }\Gmu{2}.
\end{alignat*}
We estimate
\begin{align}
\left\Vert \mathcal{B}_{1}(t_0)h\right\Vert _{H^{\frac{1}{2}}(\Sigma)} & \leq C\big\Vert \big[\partial_{\mathbf{n}_{\Gamma_{t_{0}}}}\mu_{1}\big]\big\Vert _{H^{\frac{1}{2}}\left(\Gamma_{t_{0}}\right)}\nonumber\\&\leq C\left(\Vert \mu_{1}^{+}\Vert _{H^{2}(\Omega^{+}(t_0))}+\Vert \mu_{1}^{-}\Vert _{H^{2}(\Omega^{-}(t_0))}\right)
  \leq C\Vert h\Vert _{H^{\frac{3}{2}}(\Sigma)},\label{eq:max-reg-per}
\end{align}
where $C>0$ can be chosen independent of $h$ and $t_{0}\in\left[0,T\right]$.
Here we again employed the continuity of the trace operator and elliptic
theory.

Defining 
\[
\mathcal{B}(t_0)\colon H^{\frac{7}{2}}(\Sigma)\to H^{\frac{1}{2}}(\Sigma)\colon h\mapsto\mathcal{B}(t_0)h:=\tilde{\mathbf{b}}(.,t_{0})\cdot \nabla_{\Gamma}h-b_{1}(.,t_{0})h+(\mathcal{B}_{0}(t_0)+\mathcal{B}_{1}(t_0))h,
\]
where $\tilde{\mathbf{b}}$ is chosen such that $\partial_th+\tilde{\mathbf{b}}\cdot \nabla_\Gamma h= D_{t,\Gamma}h+{\mathbf{b}}\cdot \nabla_\Gamma h $, and using (\ref{eq:max-reg-per})
and (\ref{eq:B0}), we find that
\[
\left\Vert \mathcal{B}(t_0)h\right\Vert _{H^{\frac{1}{2}}(\Sigma)}\leq C\Vert h\Vert _{H^{\frac{5}{2}}(\Sigma)}.
\]
Thus, we can extend $\mathcal{B}(t_0)$ to a bounded
operator $\mathcal{B}(t_0)\colon H^{\frac{5}{2}}(\Sigma)\rightarrow H^{\frac{1}{2}}(\Sigma)$.
Since $H^{\frac{5}{2}}(\Sigma)$ is close to $H^{\frac{1}{2}}(\Sigma)$
compared to $H^{\frac{7}{2}}(\Sigma)$ as the embedding $H^{\frac{7}{2}}(\Sigma)\hookrightarrow H^{\frac{5}{2}}(\Sigma)$
is compact, we get due to Theorem~\ref{thm:Perturbation}, that $\text{\ensuremath{\mathcal{A}}}=A_{0}+\mathcal{B}$
has $L^{p}$-maximal regularity for all $t\in\left[0,T\right]$.

By elliptic theory 
\begin{align*}
\left\Vert \mu^{\pm}\right\Vert _{H^{1}\left(\Omega^{\pm}(t)\right)} & \leq C\left\Vert X_{0}^{*,-1}\left(\sigma\Delta_{\Gamma}h+b_{2}h\right)\right\Vert _{H^{\frac{1}{2}}\left(\Gamma_{t}\right)}\leq C\left\Vert h\right\Vert _{H^{\frac{5}{2}}(\Sigma)}
\end{align*}
for almost all $t\in\left[0,T\right]$ and thus 
\[
\left\Vert \mu^{\pm}\right\Vert _{L^{6}\left(0,T;H^{1}\left(\Omega^{\pm}(t)\right)\right)}\leq C\left\Vert h\right\Vert _{L^{6}\left(0,T;H^{\frac{5}{2}}(\Sigma)\right)}\leq C\left\Vert h\right\Vert _{X_{T}}.
\]
\end{proof}
\begin{theorem}
\label{stokesthe} Let $T\in\left(0,T_{0}\right]$ and $t\in\left[0,T\right]$.
For every $\mathbf{f}\in L^{2}(\Omega)^d$, $\mathbf{s}\in H^{\frac{3}{2}}(\Gamma_{t})^d$, $\mathbf{a}\in H^{\frac{1}{2}}(\Gamma_{t})^d$
and $\mathbf{g}\colon \partial \Omega\to \R^d$ such that  $\mathbf{g}|_{\Gv{1}}\in H^{\frac{3}{2}}(\Gv{1})^d$, $\no_{\partial\Omega}\cdot \mathbf{g}|_{\Gv{2}}\in H^{\frac{3}{2}}(\Gv{2})$, $(I-\no_{\partial\Omega}\otimes\no_{\partial\Omega})\mathbf{g}|_{\Gv{2}}\in H^{\frac{1}{2}}(\Gv{2})^d$, $\mathbf{g}|_{\Gv{3}}\in H^{\frac{1}{2}}(\Gv{3})^d$ satisfying the compatibility condition
\begin{equation}
  \label{eq:compStokes}
  \int_{\Gamma_t} \no_{\Gamma_t}\cdot \mathbf{s} \, d\mathcal{H}^{d-1} + \int_{\partial\Omega} \no_{\partial\Omega}\cdot \mathbf{g} \, d\mathcal{H}^{d-1} =0\qquad \text{if }\Gv{3}=\emptyset
\end{equation}
the system 
\begin{align}
-\Delta\mathbf{v}^{\pm}+\nabla p^{\pm} & =\mathbf{f} &  & \text{in }\Omega^{\pm}(t),\label{eq:zweiphasen1}\\
\mathrm{div}\mathbf{v}^{\pm} & =0 &  & \text{in }\Omega^{\pm}(t),\\
B_j(\mathbf{v}^{-},p^{-}) & =\mathbf{g}|_{\Gv{j}}=: \mathbf{g}_j &  & \text{on }\Gv{j}, j=1,2,3,\\
\left[\mathbf{v}\right] & =\mathbf{s} &  & \text{on }\Gamma_{t},\\
\left[2D_{s}\mathbf{v}-p\mathbf{I}\right]\mathbf{n}_{\Gamma_{t}} & =\mathbf{a} &  & \text{on }\Gamma_{t}\label{eq:zweiphasen5}
\end{align}
has a unique solution $\left(\mathbf{v}^{\pm},p^{\pm}\right)\in H^{2}(\Omega^{\pm}(t))^d\times H^{1}(\Omega^{\pm}(t))$ satisfying $\int_\Omega p\, dx=0$ if $\Gv{3}=\emptyset$.
Moreover, there is a constant $C>0$ independent of $t\in\left[0,T_{0}\right]$
such that
\begin{align}\nonumber
  &\left\Vert \left(\mathbf{v},p\right)\right\Vert _{H^{2}(\Omega^{\pm}(t))\times H^{1}(\Omega^{\pm}(t))}\leq C \left(\| \mathbf{f}\| _{L^{2}(\Omega)}+\left\Vert \mathbf{s}\right\Vert _{H^{\frac{3}{2}}(\Gamma_{t})}+\left\Vert \mathbf{a}\right\Vert _{H^{\frac{1}{2}}(\Gamma_{t})}\right. \\
  &\qquad \left.+\left\Vert \mathbf{g}_1\right\Vert _{H^{\frac{3}{2}}(\Gv{1})}+ \left\Vert \mathbf{g}_{2,\no}\right\Vert _{H^{\frac{3}{2}}(\Gv{2})} + \left\Vert \mathbf{g}_{2,\btau}\right\Vert _{H^{\frac{1}{2}}(\Gv{2})} +\left\Vert \mathbf{g}_3\right\Vert _{H^{\frac{1}{2}}(\Gv{3})}\right)\label{eq:instoab}
\end{align}
holds.
\end{theorem}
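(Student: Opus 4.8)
The plan is to treat the two-phase Stokes system by a standard strategy: first reduce to a one-phase problem on $\Omega^-(t)$ with mixed boundary conditions on $\partial\Omega$ and the interface $\Gamma_t$, then invoke known elliptic/Stokes regularity together with the Korn inequality \eqref{eq:Korn} for the a priori estimate. More precisely, I would proceed in three main steps.

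\emph{Step 1: Solve in $\Omega^+(t)$ and reduce to one phase.} Since $\Omega^+(t)\Subset\Omega$ has smooth boundary $\Gamma_t$, the inhomogeneous Stokes problem $-\Delta\mathbf v^++\nabla p^+=\mathbf f$, $\operatorname{div}\mathbf v^+=0$ in $\Omega^+(t)$ with a Dirichlet condition $\mathbf v^+=\boldsymbol\varphi$ on $\Gamma_t$ has, for $\boldsymbol\varphi\in H^{3/2}(\Gamma_t)^d$ with $\int_{\Gamma_t}\no_{\Gamma_t}\cdot\boldsymbol\varphi=\int_{\Omega^+(t)}0=0$, a unique solution $(\mathbf v^+,p^+)\in H^2\times H^1$ with $\int_{\Omega^+(t)}p^+=0$ and the usual estimate (see e.g.\ Galdi or Boyer--Fabrie). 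The idea is to use the traces of the unknown $\mathbf v^+|_{\Gamma_t}$ and $p^+|_{\Gamma_t}$ as auxiliary data: given $\mathbf v^-$, set $\mathbf v^+|_{\Gamma_t}:=\mathbf v^-|_{\Gamma_t}-\mathbf s$ (so that $[\mathbf v]=\mathbf s$ is built in), solve the $\Omega^+$-problem, and then the interface stress condition \eqref{eq:zweiphasen5} becomes a boundary condition on $\Gamma_t$ for the $\Omega^-$-problem, namely $(2D_s\mathbf v^--p^-\mathbf I)\no_{\Gamma_t}=\mathbf a+(2D_s\mathbf v^+-p^+\mathbf I)\no_{\Gamma_t}$, where the right-hand side depends on $\mathbf v^-$ only through its trace in a compact/lower-order way.

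\emph{Step 2: The one-phase problem on $\Omega^-(t)$.} This leaves a Stokes system in $\Omega^-(t)$ with a Neumann-type (stress) condition on $\Gamma_t$, Dirichlet on $\Gv1$, the mixed slip condition on $\Gv2$, and Robin-type condition on $\Gv3$. I would set this up variationally: test with $\mathbf w\in H^1(\Omega^-(t))^d$, $\operatorname{div}\mathbf w=0$, $\mathbf w|_{\Gv1}=0$, $\no_{\partial\Omega}\cdot\mathbf w|_{\Gv2}=0$; the bilinear form is $2\int D_s\mathbf v^-:D_s\mathbf w+\alpha_2\int_{\Gv2}\mathbf v^-_{\btau}\cdot\mathbf w_{\btau}+\alpha_3\int_{\Gv3}\mathbf v^-\cdot\mathbf w$, which is coercive on the constraint space precisely by Korn's inequality \eqref{eq:Korn} (this is why the nonzero-kernel hypothesis $|\Gv1|+\alpha_2|\Gv2|+\alpha_3|\Gv3|>0$ is assumed). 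Lax--Milgram gives a weak solution $\mathbf v^-$; the pressure $p^-$ is recovered via de Rham / the inf-sup condition, using the appendix result on existence of a pressure, with $\int_\Omega p=0$ normalization in the case $\Gv3=\emptyset$ (the compatibility condition \eqref{eq:compStokes} is exactly what makes the nonhomogeneous divergence/flux data consistent in that case). Elliptic regularity for the Stokes operator with these (smooth, Shapiro--Lopatinskii-compatible) boundary conditions then upgrades $(\mathbf v^-,p^-)$ to $H^2\times H^1$ with the corresponding estimate — here one localizes near the portions of $\partial\Omega$ and $\Gamma_t$ and uses that the three boundary operators $B_j$ are each of a standard admissible type (pure Dirichlet, Navier slip, Robin).

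\emph{Step 3: Close the fixed-point / estimate.} Steps 1--2 define a map $\mathbf v^-\mapsto$ (new $\mathbf v^-$) whose "feedback" through the $\Omega^+$-solve involves only $\mathbf v^-|_{\Gamma_t}\in H^{3/2}$, i.e.\ a compact perturbation relative to the $H^2(\Omega^-)$-norm; combined with uniqueness (if $\mathbf f,\mathbf s,\mathbf a,\mathbf g=0$ then testing with $\mathbf w=\mathbf v^-$ forces $D_s\mathbf v^-=0$ plus boundary terms vanishing, hence $\mathbf v^-\equiv0$ by Korn, then $\mathbf v^+\equiv0$, then $p^\pm=$ const $=0$), one obtains existence and uniqueness for the coupled system by a Fredholm argument, and the a priori bound \eqref{eq:instoab} follows by collecting the estimates from Steps 1--2, noting that the interface correction term is absorbed since the map is a compact perturbation of an invertible one. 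The uniformity of $C$ in $t\in[0,T_0]$ follows because $\Gamma_t$ depends smoothly on $t$ on a compact interval, so all the elliptic constants can be taken uniform (one may pull everything back to a fixed reference configuration via the diffeomorphism $X_0$ and a Hanzawa-type transformation, exactly as sketched in the proof of Theorem \ref{Max-Reg}).

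The main obstacle I expect is \emph{Step 2}: verifying that the simultaneous presence of three different boundary conditions on $\partial\Omega$ together with the Neumann/stress condition on the interface still yields a well-posed elliptic system in $H^2\times H^1$ — one must check the Shapiro--Lopatinskii (complementing) condition at every boundary patch and, more delicately, handle the behavior where the closed sets $\Gv1,\Gv2,\Gv3$ meet (the hypothesis that they are disjoint and closed means $\partial\Omega$ splits into separated components, which removes corner issues, but this must be used explicitly). The coercivity via Korn's inequality and the correct pressure normalization in the $\Gv3=\emptyset$ case are the other points requiring care.
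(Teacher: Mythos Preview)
Your approach differs substantially from the paper's, and Step~3 contains a genuine gap.

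The paper does \emph{not} decouple the two phases. After reducing to $\mathbf s=0$ (by constructing an auxiliary Stokes pair $(\tilde{\mathbf w},\tilde p)$ on $\Omega^-(t)$ with $\tilde{\mathbf w}|_{\Gamma_t}=\mathbf s$, via a Neumann Laplace problem followed by a Dirichlet Stokes problem), it poses a \emph{single} weak formulation on all of $\Omega$: find $\mathbf v\in V(\Omega)\subset H^1(\Omega)^d$ such that $\int_\Omega 2D_s\mathbf v:D_s\bpsi+\alpha_2\int_{\Gv2}\mathbf v\cdot\bpsi+\alpha_3\int_{\Gv3}\mathbf v\cdot\bpsi$ equals the data functional, for all admissible $\bpsi$. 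Korn's inequality \eqref{eq:Korn} gives coercivity on the whole of $\Omega$, Lax--Milgram yields $\mathbf v$ directly, and Lemma~\ref{lem:pressure} produces $p$. Higher regularity then comes from localization with a partition of unity $\eta^\pm$: near $\partial\Omega$ one has a one-phase Stokes problem with the $B_j$ conditions (handled by known results for Dirichlet, Navier, and do-nothing boundary operators), and near $\Gamma_t$ a genuine two-phase Stokes problem with stress jump $\mathbf a$, handled by \cite{shibatastokesdirichlet}; Bogovskii operators repair the divergence after cutoff.

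The gap in your scheme is the compactness claim in Step~3. Your feedback map factors as
\[
H^2(\Omega^-(t))\xrightarrow{\ \mathrm{tr}\ }H^{3/2}(\Gamma_t)\xrightarrow{\ \mathrm{DtN}_{\Omega^+}\ }H^{1/2}(\Gamma_t)\xrightarrow{\ \text{solve }\Omega^-\ }H^2(\Omega^-(t)).
\]
The Stokes Dirichlet-to-Neumann operator is a genuine first-order pseudodifferential operator, and the trace $H^2\to H^{3/2}$ is bounded but not compact, so the composite is a bounded order-zero map on $H^2(\Omega^-(t))$---\emph{not} compact. The Fredholm argument therefore does not go through as stated. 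You could rescue the approach by observing that the DtN contribution is nonnegative (it encodes $\int_{\Omega^+(t)}2|D_s\mathbf v^+|^2$) and absorbing it into the bilinear form on $\Omega^-(t)$; but doing so simply rebuilds the global bilinear form on $\Omega$, which is exactly the paper's starting point. A secondary bookkeeping issue: the Dirichlet Stokes solve on $\Omega^+(t)$ in Step~1 requires the flux compatibility $\int_{\Gamma_t}\no_{\Gamma_t}\cdot(\mathbf v^-|_{\Gamma_t}-\mathbf s)=0$ at every iterate, which you have not verified is preserved.
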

\begin{proof}
 We can assume for simplicity that $\mathbf{g}=0$ on $\Gv{1}$ and $\no_{\partial\Omega}\cdot \mathbf{g}=0$ on $\Gv{2}$. Otherwise we substract a suitable extension of $\mathbf{g}$.
As a first step, we reduce the system (\ref{eq:zweiphasen1})\textendash (\ref{eq:zweiphasen5})
to the case $\mathbf{s}=0$. Elliptic theory implies that the equation
\begin{align*}
\Delta q & =0 &  & \text{in }\Omega^{-}(t),\\
\nabla q\cdot\mathbf{n}_{\Gamma_{t}} & =\mathbf{s}\cdot\mathbf{n}_{\Gamma_{t}} &  & \text{on }\Gamma_{t},\\
  \no_{\partial\Omega} \cdot \nabla q & =\no_{\partial\Omega}\cdot \mathbf{g}_{2} &  & \text{on }\Gv{1}\cup \Gv{2},\\
  q & =0 &  & \text{on }\Gv{3}
\end{align*}
has a unique solution $q\in H^{3}\left(\Omega^{-}(t)\right)$ with $\int_{\Omega^-(t)} q\, dx =0$ if $\Gv{3}=\emptyset$
since $\mathbf{s}\in H^{\frac{3}{2}}\left(\Gamma_{t}\right)^d$ and $\no_{\partial\Omega}\cdot \mathbf{g}_{2}\in H^{\frac32}(\Gv{1}\cup \Gv{2})$. Here, if $\Gv{3}=\emptyset$, the necessary compatibility condition is satisfied because of \eqref{eq:compStokes}. Moreover,
we have the estimate 
\[
\left\Vert q\right\Vert _{H^{3}\left(\Omega^{-}(t)\right)}\leq C\left(\left\Vert \mathbf{s}\right\Vert _{H^{\frac{3}{2}}(\Gamma_{t})}+ \|\no_{\partial\Omega}\cdot \mathbf{g}_{2}\|_{H^{\frac32}(\Gv{1}\cup \Gv{2})}\right).
\]
Regarding the tangential part of $\mathbf{s}$, we may solve the stationary
Stokes system 
\begin{align*}
-\Delta\mathbf{w}+\nabla\tilde{p} & =0 &  & \text{in }\Omega^{-}(t),\\
\operatorname{div}\mathbf{w} & =0 &  & \text{in }\Omega^{-}(t),\\
  \mathbf{w} & =(I-\no_{\Gamma_t}\otimes \no_{\Gamma_t})\left(\mathbf{s}-\nabla q\right) &  & \text{on }\Gamma_{t},\\
  \mathbf{w} & =0 &  & \text{on }\partial\Omega.
\end{align*}
We may find a solution $\left(\mathbf{w},\tilde{p}\right)\in H^{2}(\Omega^{-}(t))^d\times H^{1}(\Omega^{-}(t))$
(made unique by the normalization $\int_{\Omega^{-}(t)}\tilde{p}\sd x=0$)
and also get the estimate 
\[
\left\Vert \mathbf{w}\right\Vert _{H^{2}(\Omega^{-}(t))}+\left\Vert \tilde{p}\right\Vert _{H^{1}(\Omega^{-}(t))}\leq C\left(\left\Vert \mathbf{s}\right\Vert _{H^{\frac{3}{2}}(\Gamma_{t})}+ \|\no_{\partial\Omega}\cdot \mathbf{g}_{2}\|_{H^{\frac32}(\Gv{1}\cup \Gv{2})}\right).
\]
 Thus, defining 
$
\tilde{\mathbf{w}}:=\mathbf{w}+\nabla q, 
$
the couple $\left(\tilde{\mathbf{w}},\tilde{p}\right)$ solves 
\begin{align*}
-\Delta\tilde{\mathbf{w}}+\nabla\tilde{p} & =0 &  & \text{in }\Omega^{-}(t),\\
\operatorname{div}\tilde{\mathbf{w}} & =0 &  & \text{in }\Omega^{-}(t),\\
\tilde{\mathbf{w}} & =\mathbf{s} &  & \text{on }\Gamma_{t},\\
\tilde{\mathbf{w}} & =0 &  & \text{on }\partial\Omega,
\end{align*}
and may be estimated by $\mathbf{s}$ in strong norms. Next, let 
\[
  \tilde{\mathbf{g}}:=\mathbf{g}_j+B_j(\tilde{\mathbf{w}},\tilde{p})\qquad \text{on }\Gv{j}, j=1,2,3
\]
and $\tilde{\mathbf{a}}:=\mathbf{a}-\left(2D_{s}\tilde{\mathbf{w}}-\tilde{p}\mathbf{I}\right)\mathbf{n}_{\Gamma_t}\in H^{\frac{1}{2}}(\Gamma_{t})^d$,
where the regularity is due to the properties of the trace operator.
Then, for every strong solution $\left(\hat{\mathbf{v}}^{\pm},\hat{p}^{\pm}\right)$
of \eqref{eq:zweiphasen1}-\eqref{eq:zweiphasen5}, with
$\mathbf{s\equiv}0$ and $\mathbf{g},\mathbf{a}$ substituted by $\tilde{\mathbf{g}},\tilde{\mathbf{a}}$,
the functions
\[
\left(\mathbf{v}^{+},p^{+}\right):=\left(\hat{\mathbf{v}}^{+},\hat{p}^{+}\right)\text{ and }\left(\mathbf{v}^{-},p^{-}\right):=\left(\hat{\mathbf{v}}^{-}-\tilde{\mathbf{w}},\hat{p}^{-}-\tilde{p}\right)
\]
are solutions to the original system \eqref{eq:zweiphasen1}-\eqref{eq:zweiphasen5}.
So, we will consider $\mathbf{s}\equiv0$ in the following and show
existence of strong solutions in that case.

As a starting point, we construct a solution $\left(\mathbf{v},p\right)\in V(\Omega)\times L^{2}(\Omega)$
to the weak formulation 
\begin{align}
  &\int_{\Omega}2D_{s}\mathbf{v}:D_{s}\bpsi\, dx -\int_\Omega p\operatorname{div}\bpsi\sd x+\int_{\Gv{2}}\alpha_{2}\mathbf{v}\cdot\bpsi\sd\mathcal{H}^{d-1}(s)+\int_{\Gv{3}}\alpha_{3}\mathbf{v}\cdot\bpsi\sd\mathcal{H}^{d-1}(s)\nonumber\\
  &\qquad =\int_{\Omega}\mathbf{f}\cdot\bpsi\sd x+\int_{\Gamma_{t}}\mathbf{a}\cdot\bpsi\sd\mathcal{H}^{d-1}(s)-\int_{\partial\Omega}\mathbf{g}\cdot\bpsi\sd\mathcal{H}^{d-1}(s),\label{eq:weak2phase}
\end{align}
for all $\bpsi\in H^{1}(\Omega)^d$ with $\bpsi|_{\Gv{1}}=0$, $\no \cdot \bpsi|_{\Gv{2}}=0$, where
\begin{equation*}
  V(\Omega)= \{\mathbf{u}\in H^1(\Omega)^d: \operatorname{div} \mathbf{u}=0, \mathbf{u}|_{\Gv{1}}=0, \no \cdot \mathbf{u}|_{\Gv{2}}=0\}.
\end{equation*}
Considering first $\bpsi\in V(\Omega)$
and the right hand side as a functional $\mathbf{F}\in\left(V(\Omega)\right)'$,
the Lemma of Lax-Milgram implies the existence of a unique $\mathbf{v}\in V(\Omega)$
solving (\ref{eq:weak2phase}) for all $\bpsi\in V(\Omega)$,
where the coercivity of the involved bilinear form is a consequence
of \eqref{eq:Korn}.

Next consider the functional
\begin{align*}
 F(\bpsi):= &-\int_{\Omega}2D_{s}\mathbf{v}:D_{s}\bpsi\, dx -\int_{\Gv{2}}\alpha_{2}\mathbf{v}\cdot\bpsi\sd\mathcal{H}^{d-1}(s)-\int_{\Gv{3}}\alpha_{3}\mathbf{v}\cdot\bpsi\sd\mathcal{H}^{d-1}(s)\\
  &\qquad +\int_{\Omega}\mathbf{f}\cdot\bpsi\sd x+\int_{\Gamma_{t}}\mathbf{a}\cdot\bpsi\sd\mathcal{H}^{d-1}(s)-\int_{\partial\Omega}\mathbf{g}\cdot\bpsi\sd\mathcal{H}^{d-1}(s),
\end{align*}
for all $\bpsi\in H^1(\Omega)^d$ with $\bpsi|_{\Gv{1}}=0$, $\no \cdot \bpsi|_{\Gv{2}}=0$. Then $F$ vanishes on $V(\Omega)$ and by Lemma~\ref{lem:pressure} in  Appendix A 
there is a unique $p\in L^2 (\Omega)$ with  $\int_\Omega p\, dx =0$ if $\Gv{3}=\emptyset$ such that
\begin{equation*}
  F(\bpsi)= -\int_\Omega p \operatorname{div} \bpsi \, dx \quad \text{for all }\ \bpsi\in H^1(\Omega)^d\text{ with }\bpsi|_{\Gv{1}}=0, \no \cdot \bpsi|_{\Gv{2}}=0.
\end{equation*}
Hence $(\mathbf{v},p)$ solve \eqref{eq:weak2phase}.
Moreover, we obtain the estimate
\begin{equation}
\left\Vert (\mathbf{v},p)\right\Vert _{H^{1}(\Omega)\times L^{2}(\Omega)}\leq C\left(\left\Vert \mathbf{f}\right\Vert _{L^{2}(\Omega)}+\left\Vert \mathbf{a}\right\Vert _{H^{\frac{1}{2}}(\Gamma_{t})}+\left\Vert \mathbf{g}\right\Vert _{H^{\frac{1}{2}}(\partial\Omega)}\right).\label{eq:2weak2estimate}
\end{equation}
We now show higher regularity of $\left(\mathbf{v},p\right)$ by localization.

Let $\eta^{\pm}\in C^{\infty}\left(\overline{\Omega}\right)$ be a
partition of unity of $\Omega$, such that the inclusions $\Omega^{+}(t)\cup\Gamma_{t}\left(\delta\right)\subset\left\{ \left.x\in\Omega\right|\eta^{+}(x)=1\right\} $
and $\partial\Omega\left(\delta\right)\subset\left\{ \left.x\in\Omega\right|\eta^{-}(x)=1\right\} $
hold. We choose $\eta^\pm$ such that $\{x\in\Omega: \eta^\pm(x)=1\}$ has smooth boundary and define $U^{\pm}:=\text{supp}\left(\eta^{\pm}\right)$, $\partial U_{0}^{-}:=\partial U^{-}\backslash\partial\Omega$
and 
\[
\dot{U}:=\left\{ \left.x\in\Omega\right|\eta^{+}(x)\in\left(0,1\right)\right\} =\left\{ \left.x\in\Omega\right|\eta^{-}(x)\in\left(0,1\right)\right\} .
\]
Moreover, we set $\dot{p}^{-}:=p\eta^{-}$ and $\tilde{\mathbf{v}}^{-}:=\mathbf{v}\eta^{-}$
in $\Omega$ and we correct the divergence of $\tilde{\mathbf{v}}^{-}$
with the help of the Bogovskii-operator: Let $\varphi\in C_{c}^{\infty}(\Omega)$
with $\text{supp}\left(\varphi\right)\subset U^{+}\backslash\dot{U}$
and $\int_{\Omega}\varphi\sd x=1$ and set 
\[
\hat{g}:=\operatorname{div}\left(\tilde{\mathbf{v}}^{-}\right)-\varphi\int_{U^{+}}\operatorname{div}\left(\tilde{\mathbf{v}}^{-}\right)\sd x
\]
in $U^{+}$. As $\mathbf{v}\in V(\Omega)$,
we have $\operatorname{div}\left(\tilde{\mathbf{v}}^{-}\right)=\mathbf{v}\cdot\nabla\eta^{-}$
and thus $\hat{g}\in H_{0}^{1}\left(U^{+}\right)$, $\int_{U^{+}}\hat{g}\sd x=0$.
Consequently, \cite[Theorem III.3.3]{galdi} implies that
there is $\hat{\mathbf{v}}^{-}\in H_{0}^{2}\left(U^{+}\right)$, which
we extend onto $\Omega$ by $0$, satisfying
\begin{align}
\operatorname{div}\hat{\mathbf{v}}^{-} & =\hat{g}\text{ in }U^{+},\nonumber \\
\left\Vert \hat{\mathbf{v}}^{-}\right\Vert _{H^{2}(\Omega)} & \leq C\left\Vert \mathbf{v}\right\Vert _{H^{1}(\Omega)}.\label{eq:vhut-}
\end{align}
Therefore, $\dot{\mathbf{v}}^{-}:=\tilde{\mathbf{v}}^{-}-\hat{\mathbf{v}}^{-}$
fulfills $\operatorname{div}\dot{\mathbf{v}}^{-}=0$ in $U^{-}$ since $\varphi\equiv0$
in that domain. Let now
$$
\bpsi\in\left\{ \mathbf{w}\in H^{1}(U^{-})^d: \mathbf{w}=0\text{ on }\partial U_{0}^{-}, \mathbf{w}|_{\Gv{1}}=0, \no\cdot \mathbf{w}|_{\Gv{2}}=0\right\},
$$
then 
\begin{align*}
\int_{U^{-}} & 2D_{s}\dot{\mathbf{v}}^{-}:D_{s}\bpsi-\dot{p}^{-}\operatorname{div}\bpsi\sd x+\int_{\Gv{2}}\alpha_{2}\mathbf{v}\cdot\bpsi\sd\mathcal{H}^{d-1}(s)+\int_{\Gv{3}}\alpha_{3}\mathbf{v}\cdot\bpsi\sd\mathcal{H}^{d-1}(s)\\
 & =\int_{U^{-}}2D_{s}\tilde{\mathbf{v}}^{-}:D_{s}\bpsi-p\operatorname{div}\left(\bpsi\eta^{-}\right)+\left(p\nabla\eta^{-}\right)\cdot\bpsi\sd x+\int_{\Gv{2}}\alpha_{2}\mathbf{v}\cdot\bpsi\sd\mathcal{H}^{d-1}(s)\\
 & \quad+\int_{\Gv{3}}\alpha_{3}\mathbf{v}\cdot\bpsi\sd\mathcal{H}^{d-1}(s)-\int_{U^{-}}2D_{s}\hat{\mathbf{v}}^{-}:D_{s}\bpsi\sd x\\
 & =\int_{U^{-}}\mathbf{f}\cdot\bpsi\eta^{-}\sd x-\int_{\partial\Omega}\mathbf{g}\cdot\bpsi\sd\mathcal{H}^{d-1}(s)+\left(p\nabla\eta^{-}\right)\cdot\bpsi\sd x\\
 & \quad+\int_{U^{-}}2\operatorname{div}\left(D_{s}\hat{\mathbf{v}}\right)\cdot\bpsi+\left(2D_{s}\mathbf{v}\nabla\eta^{-}-\operatorname{div}\left(\mathbf{v}\otimes\nabla\eta^{-}+\nabla\eta^{-}\otimes\mathbf{v}\right)\right)\cdot\bpsi\sd x,
\end{align*}
where we used the definition of $\dot{\mathbf{v}}^{-}$ and $\dot{p}^{-}$
in the first equality and integration by parts together with $\hat{\mathbf{v}}^{-}\in H_{0}^{2}\left(U^{+}\right)$
and $\nabla\eta^{-}=0$ on $U^{-}$ in the second equality. Additionally,
we employed the fact that $\left(\mathbf{v},p\right)$ is the weak
solution to (\ref{eq:weak2phase}). Hence, $\left(\dot{\mathbf{v}}^{-},\dot{p}^{-}\right)$
are a weak solution to the system
\begin{align}
-\Delta\dot{\mathbf{v}}^{-}+\nabla\dot{p}^{-} & =\tilde{\mathbf{f}} &  & \text{in }U^{-},\nonumber \\
\operatorname{div}\dot{\mathbf{v}}^{-} & =0 &  & \text{in }U^{-},\nonumber \\
\dot{\mathbf{v}}^{-} & =\hat{\mathbf{v}}^{-} &  & \text{on }\partial U_{0}^{-},\nonumber \\
  B_j(\mathbf{v}^{-},\dot{p}^{-}) & =\mathbf{g} &  & \text{on }\Gv{j}, j=1,2,3,\label{eq:onephase outer}
\end{align}
where 
\[
  \tilde{\mathbf{f}}:=\mathbf{f}\eta^-
  + p\nabla\eta^{-}+2\operatorname{div}(D_{s}\hat{\mathbf{v}})+2D_{s}\mathbf{v}\nabla\eta^{-}-\operatorname{div}\left(\mathbf{v}\otimes\nabla\eta^{-}+\nabla\eta^{-}\otimes\mathbf{v}\right)\in L^{2}(U^{-})
\]
and $\hat{\mathbf{v}}^{-}\in H^{\frac{3}{2}}(\partial U_{0}^{-})$
by the properties of the trace operator. Writing
\begin{equation*}
  \tilde{\mathbf{g}}:=
  \begin{cases}
\mathbf{g} &\text{on }\Gv{1},\\
\alpha_{j}\dot{\mathbf{v}}^{-}+\mathbf{g} &\text{on }\Gv{j},j=2,3,      
  \end{cases}
\end{equation*}
using localization techniques and results for strong solutions of
the stationary Stokes equation in one phase with inhomogeneous do-nothing
boundary condition (cf.\ Theorem 3.1 in \cite{ShimizuStokes}),
with Dirichlet boundary condition (cf.\ \cite{galdi}) and slip-boundary conditions (cf.\  Solonnikov and \v{S}\v{c}adilov~\cite[Theorem~2]{SolonnikovNavier}), 
we find that
$\left(\dot{\mathbf{v}}^{-},\dot{p}^{-}\right)\in H^{2}(U^{-})\times H^{1}(U^{-})$.
Moreover, regarding (\ref{eq:vhut-}), (\ref{eq:2weak2estimate})
and the definition of $\tilde{\mathbf{f}}$, we get 
\[
\left\Vert \left(\dot{\mathbf{v}}^{-},\dot{p}^{-}\right)\right\Vert _{H^{2}\left(U^{-}\right)\times H^{1}\left(U^{-}\right)}\leq C\left(\left\Vert \mathbf{f}\right\Vert _{L^{2}(\Omega)}+\left\Vert \mathbf{a}\right\Vert _{H^{\frac{1}{2}}\left(\Gamma_{t}\right)}+\left\Vert \mathbf{g}\right\Vert _{H^{\frac{1}{2}}\left(\partial\Omega\right)}\right).
\]
Analogously, we define $\tilde{\mathbf{v}}^{+}:=\mathbf{v}\eta^{+}$
and $\hat{\mathbf{v}}^{+}\in H_{0}^{2}(\dot{U})$ as a
solution to $\operatorname{div}\mathbf{\hat{\mathbf{v}}}^{+}=\operatorname{div}\tilde{\mathbf{v}}^{+}$.
Here, we do not need to correct the mean value, since 
\[
\int_{\dot{U}}\operatorname{div}\tilde{\mathbf{v}}^{+}\sd x=\int_{\partial\dot{U}}\mathbf{v}\cdot\mathbf{n}_{\partial\dot{U}}\eta^{+}\sd\mathcal{H}^{d-1}(s)=-\int_{\left\{ \eta^{+}=1\right\} }\operatorname{div}\mathbf{v}\sd x=0.
\]
We set $\dot{\mathbf{v}}^{+}:=\tilde{\mathbf{v}}^{+}-\hat{\mathbf{v}}^{+}$
and $\dot{p}^{+}:=p\eta^{+}$ and get after similar calculations as
before that $\left(\dot{\mathbf{v}}^{+},\dot{p}^{+}\right)$ is a
weak solution to the two phase stationary Stokes system 
\begin{align}
-\Delta\dot{\mathbf{v}}^{+}+\nabla\dot{p}^{+} & =\hat{\mathbf{f}} &  & \text{in }U^{+},\label{eq:zweiphasen1-1}\\
\operatorname{div}\dot{\mathbf{v}}^{+} & =0 &  & \text{in }U^{+},\\
\dot{\mathbf{v}}^{+} & =0 &  & \text{on }\partial U^{+},\\
\left[\dot{\mathbf{v}}^{+}\right] & =0 &  & \text{on }\Gamma_{t},\\
\left[2D_{s}\dot{\mathbf{v}}^{+}-\dot{p}^{+}\mathbf{I}\right]\mathbf{n}_{\Gamma_{t}} & =\mathbf{a} &  & \text{on }\Gamma_{t},\label{eq:zweiphasen5-1}
\end{align}
where $\hat{\mathbf{f}}\in L^{2}\left(U^{+}\right)$. Then \cite[Theorem 1.1]{shibatastokesdirichlet}
 implies $\dot{\mathbf{v}}^{+}|_{\Omega^{+}(t)}\in H^{2}(\Omega^{+}(t))$
and $\dot{\mathbf{v}}^{+}|_{U^{+}\backslash\Omega^{+}(t)}\in H^{2}(U^{+}\backslash\Omega^{+}(t))$,
and also that the pressure satisfies $\dot{p}^{+}|_{\Omega^{+}(t)}\in H^{1}(\Omega^{+}(t))$
and $\dot{p}^{+}|_{U^{+}\backslash\Omega^{+}(t)}\in H^{1}(U^{+}\backslash\Omega^{+}(t))$
with estimates in the associated norms. In particular, $\mathbf{v}=\dot{\mathbf{v}}^{+}$
in $\Omega^{+}(t)$ and $\mathbf{v}=\dot{\mathbf{v}}^{+}+\dot{\mathbf{v}}^{-}+\hat{\mathbf{v}}^{+}+\text{\ensuremath{\hat{\mathbf{v}}}}^{-}$
in $\Omega^{-}(t)$, yielding the desired regularity and
(\ref{eq:instoab}). To show that $C>0$ may be chosen independently
of $t\in\left[0,T_{0}\right]$, one may make use of extension arguments,
see e.g.\ the proof of  \cite[Lemma 2.10]{nsac}.
\end{proof}
\begin{theorem}
\label{scharfthe} Let $T\in\left(0,T_{0}\right]$. Let $\mathbf{b}\colon \Sigma\times\left[0,T\right]\rightarrow\mathbb{R}^d$,
$b\colon \Sigma\times\left[0,T\right]\rightarrow\mathbb{R}$, $a_{1}\colon \Omega\times\left[0,T\right]\rightarrow\mathbb{R}$,
$a_{2},a_{3},a_{5}\colon \Gamma\rightarrow\mathbb{R}$, $a_{4}\colon \partial\Omega\times[0,T]\rightarrow\mathbb{R}$,
$\mathbf{a}_{1}\colon \Omega\times\left[0,T\right]\rightarrow\mathbb{R}^d$,
$\mathbf{a}_{2},\mathbf{a}_{3},\mathbf{a}_{4},\mathbf{a}_{5}\colon\Gamma\rightarrow\mathbb{R}^d$
and $\mathbf{a}_{6}\colon \partial\Omega\times [0,T]\rightarrow\mathbb{R}^d$
be smooth given functions such that
\begin{equation*}
    \int_{\Gamma_t} \no_{\Gamma_t}\cdot \mathbf{a}_2 \, d\mathcal{H}^{d-1} + \int_{\partial\Omega} \no_{\partial\Omega}\cdot\mathbf{a}_6 \, d\mathcal{H}^{d-1} =0\qquad \text{if }\Gv{3}=\emptyset.
\end{equation*}
For every $g\in L^{2}\big(0,T;H^{\frac{1}{2}}(\Sigma)\big)$
and $h_{0}\in H^{2}(\Sigma)$ there exists a unique
solution $h\in X_{T}$ of 
\begin{align*}
D_{t,\Gamma}h+\mathbf{b}\cdot\nabla_{\Gamma}h-bh+\tfrac{1}{2}X_{0}^{*}\big((\mathbf{v}^{+}+\mathbf{v}^{-})\cdot\mathbf{n}_{\Gamma_{t}}\big)+\tfrac{1}{2}X_{0}^{*}\big(\big[\partial_{\mathbf{n}_{\Gamma_{t}}}\mu\big]\big) & =g &  & \text{in }\Sigma\times\left(0,T\right),\\
h\left(.,0\right) & =h_{0} &  & \text{in }\Sigma,
\end{align*}
where for every $t\in\left[0,T\right]$, the functions \textbf{$\mathbf{v}^{\pm}=\mathbf{v}^{\pm}(x,t)$},
$p^{\pm}=p^{\pm}(x,t)$ and $\mu^{\pm}=\mu^{\pm}(x,t)$
for $(x,t)\in\Omega_{T}^{\pm}$ with $\mathbf{v}^{\pm}\in H^{2}(\Omega^{\pm}(t))$,
$p^{\pm}\in H^{1}(\Omega^{\pm}(t))$ and $\mu^{\pm}\in H^{2}(\Omega^{\pm}(t))$
are the unique solutions to 
\begin{align}
\Delta\mu^{\pm} & =a_{1} &  & \text{in }\Omega^{\pm}(t),\label{eq:laplmu}\\
  \mu^{\pm} & = X_{0}^{*,-1}\big(\sigma\Delta_{\Gamma}h\pm a_{2}h\big)+a_{3} &  & \text{on }\Gamma_{t},\\
  \no_{\partial\Omega}\cdot \nabla \mu^{-} & =a_{4} &  & \text{on }\Gmu{1},\\
  \mu^{-} & =a_{4} &  & \text{on }\Gmu{2},\\
-\Delta\mathbf{v}^{\pm}+\nabla p^{\pm} & =\mathbf{a}_{1} &  & \text{in }\Omega^{\pm}(t),\label{eq:stokes}\\
\operatorname{div}\mathbf{v}^{\pm} & =0 &  & \text{in }\Omega^{\pm}(t),\\
[\mathbf{v}] & =\mathbf{a}_{2} &  & \text{on }\Gamma_{t},\label{eq:stokesjmp}\\
\left[2D_{s}\mathbf{v}-p\mathbf{I}\right]\mathbf{n}_{\Gamma_{t}} & =X_{0}^{*,-1}\big(\mathbf{a}_{3}h+\mathbf{a}_{4}\Delta_{\Gamma}h+a_{5}\nabla_{\Gamma}h+\mathbf{a}_{5}\big) &  & \text{on }\Gamma_{t},\label{eq:stokesbdry}\\
B_j(\mathbf{v}^{-},p^{-})& =\mathbf{a}_{6} &  & \text{on }\Gv{j}, j=1,2,3.\label{eq:stokesoutbdry}
\end{align}
Moreover, if $g$, $h_{0}$ and $b$, $\mathbf{b}$, $a_{i}$, and
$\mathbf{a}_{j}$ are smooth on their respective domains for $i\in\left\{ 1,\ldots,5\right\}$, $j\in\left\{ 1,\ldots,6\right\}$,
then $h$ is smooth and $p^{\pm}$, $\mathbf{v}^{\pm}$ and $\mu^{\pm}$
are smooth on $\Omega^{\pm}(t)$. 
\end{theorem}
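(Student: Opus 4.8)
The plan is to eliminate $\mu^{\pm}$ and $(\mathbf v^{\pm},p^{\pm})$ by slaving them to $h$, thereby rewriting the coupled system as a single closed, non-autonomous evolution equation for $h$ that is a lower-order perturbation of the one treated in Theorem~\ref{Max-Reg}; since everything is linear, no fixed-point argument is needed. For every fixed $t$ and every $h(t)\in H^{7/2}(\Sigma)$ the decoupled Dirichlet, resp.\ mixed Dirichlet--Neumann, problems for $\mu^{\pm}$ (equation~\eqref{eq:laplmu} with its boundary conditions) are uniquely solvable with $\mu^{\pm}\in H^{2}(\Omega^{\pm}(t))$ by elliptic theory, and the two-phase Stokes system \eqref{eq:stokes}--\eqref{eq:stokesoutbdry} is uniquely solvable with $(\mathbf v^{\pm},p^{\pm})\in H^{2}(\Omega^{\pm}(t))^{d}\times H^{1}(\Omega^{\pm}(t))$ by Theorem~\ref{stokesthe}, the compatibility condition required there when $\Gv{3}=\emptyset$ being exactly the one assumed in the statement. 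Splitting off the contributions of the $h$-independent data, write $\mu^{\pm}=\mu_{h}^{\pm}+\mu_{*}^{\pm}$ and $\mathbf v^{\pm}=\mathbf v_{h}^{\pm}+\mathbf v_{*}^{\pm}$, where $\mu_{*}^{\pm},\mathbf v_{*}^{\pm}$ solve the respective systems with $h$ set to $0$ (data $a_{1},a_{3},a_{4}$, resp.\ $\mathbf a_{1},\mathbf a_{2},\mathbf a_{5},\mathbf a_{6}$), while $\mu_{h}^{\pm},\mathbf v_{h}^{\pm}$ carry only the boundary data $X_{0}^{*,-1}(\sigma\Delta_{\Gamma}h\pm a_{2}h)$ on $\Gamma_{t}$, resp.\ the stress jump $X_{0}^{*,-1}(\mathbf a_{3}h+\mathbf a_{4}\Delta_{\Gamma}h+a_{5}\nabla_{\Gamma}h)$ on $\Gamma_{t}$, and vanish otherwise. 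Substituting into the equation for $h$ turns it into
\[
  \partial_{t}h+\mathcal A(t)h+\mathcal C(t)h=\tilde g,\qquad h(\cdot,0)=h_{0},
\]
where $\mathcal A$ is the operator of Theorem~\ref{Max-Reg} with $b_{1}=b$, $b_{2}=a_{2}$ (read via $X_{0}$) and the same $\mathbf b$, where $\mathcal C(t)h:=\tfrac{1}{2}X_{0}^{*}\big((\mathbf v_{h}^{+}+\mathbf v_{h}^{-})\cdot\mathbf n_{\Gamma_{t}}\big)$, and where $\tilde g:=g-\tfrac{1}{2}X_{0}^{*}\big((\mathbf v_{*}^{+}+\mathbf v_{*}^{-})\cdot\mathbf n_{\Gamma_{t}}\big)-\tfrac{1}{2}X_{0}^{*}\big([\partial_{\mathbf n_{\Gamma_{t}}}\mu_{*}]\big)$. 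As the data are smooth, hence uniformly bounded in $t$, and the constants in Theorem~\ref{stokesthe} and in the elliptic estimates are $t$-independent on $[0,T_{0}]$, one gets $\mathbf v_{*}^{\pm},\mu_{*}^{\pm}\in L^{\infty}(0,T;H^{2}(\Omega^{\pm}(t)))$, whose traces on $\Gamma_{t}$ lie in $H^{3/2}(\Gamma_{t})\hookrightarrow H^{1/2}(\Gamma_{t})$; pulling back by $X_{0}$ gives $\tilde g\in L^{2}(0,T;H^{1/2}(\Sigma))$ (and $\tilde g$ is smooth on $\Sigma\times[0,T]$ if $g$ and the data are).

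The heart of the matter is that $\mathcal C$ is of lower order. Applying the estimate~\eqref{eq:instoab} to $\mathbf v_{h}^{\pm}$, whose only nonzero datum is the stress jump, together with $\|X_{0}^{*,-1}(\mathbf a_{3}h+\mathbf a_{4}\Delta_{\Gamma}h+a_{5}\nabla_{\Gamma}h)\|_{H^{1/2}(\Gamma_{t})}\le C\|h\|_{H^{5/2}(\Sigma)}$ and the continuity of the trace $H^{2}(\Omega^{\pm}(t))\to H^{3/2}(\Gamma_{t})$, one obtains
\[
  \|\mathcal C(t)h\|_{H^{1/2}(\Sigma)}\le C\sum_{\pm}\|\mathbf v_{h}^{\pm}\|_{H^{2}(\Omega^{\pm}(t))}\le C\|h\|_{H^{5/2}(\Sigma)}
\]
with $C$ independent of $t$, so $\mathcal C(t)\in\mathcal L(H^{5/2}(\Sigma),H^{1/2}(\Sigma))$; strong measurability of $t\mapsto\mathcal C(t)$ follows from the smooth dependence of $\Omega^{\pm}(t),\Gamma_{t}$ and the data on $t$, made rigorous by a Hanzawa transformation exactly as in the proof of Theorem~\ref{Max-Reg}. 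Recalling from that proof that $\mathcal A=A_{0}+\mathcal B$ with $A_{0}$ relatively continuous and $A_{0}(t)\in\mathcal{MR}$, with $\mathcal B\colon[0,T]\to\mathcal L(H^{5/2}(\Sigma),H^{1/2}(\Sigma))$ bounded and strongly measurable, and that $H^{5/2}(\Sigma)$ is close to $H^{1/2}(\Sigma)$ compared with $H^{7/2}(\Sigma)$ because $H^{7/2}(\Sigma)\hookrightarrow H^{5/2}(\Sigma)$ is compact (Proposition~\ref{relres}), Theorem~\ref{thm:Perturbation} applied to $A_{0}$ with perturbation $\mathcal B+\mathcal C$ yields $A_{0}+\mathcal B+\mathcal C\in\mathcal{MR}_{p}(0,T)$ for all $p\in(1,\infty)$. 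Since $H^{2}(\Sigma)=(H^{1/2}(\Sigma),H^{7/2}(\Sigma))_{1/2,2}$ is precisely the time-trace space of $X_{T}$, one subtracts an arbitrary extension $H\in X_{T}$ of $h_{0}$, reduces to vanishing initial datum and right-hand side in $L^{2}(0,T;H^{1/2}(\Sigma))$, and invokes $\mathcal{MR}_{2}(0,T)$ to obtain a unique $h\in X_{T}$. Feeding $h$ back into the elliptic problems and into Theorem~\ref{stokesthe} produces $\mu^{\pm},\mathbf v^{\pm},p^{\pm}$ with the stated regularity; uniqueness of $h$ is part of the maximal-regularity statement, and uniqueness of $\mu^{\pm},\mathbf v^{\pm},p^{\pm}$ follows from unique solvability of those subproblems.

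For the smoothness assertion one bootstraps. Using a cut-off in time vanishing near $t=0$, differentiating the evolution equation repeatedly in $t$, and noting that the $t$-derivatives of the coefficient operators remain bounded from $H^{7/2}(\Sigma)$ into $H^{1/2}(\Sigma)$ (all coefficients and the geometry depending smoothly on $t$), maximal regularity gives $\partial_{t}^{k}h\in X_{T}$ on every $[\delta,T]$, $k\in\mathbb{N}_{0}$, without any compatibility condition. Re-running the construction above on the higher-order scale $L^{2}(0,T;H^{7/2+m}(\Sigma))\cap H^{1}(0,T;H^{1/2+m}(\Sigma))$ — the Mullins--Sekerka operator $A_{0}$ being an elliptic pseudodifferential operator of order $3$ whose maximal-regularity property persists, and $\mathcal B,\mathcal C$ remaining of lower order, throughout the Sobolev scale — upgrades this to $h\in L^{2}(\delta,T;H^{s}(\Sigma))$ for every $s$, and together with the time-regularity and Sobolev embedding yields $h\in C^{\infty}$ (away from $t=0$ unconditionally, and up to $t=0$ if the data satisfy the natural compatibility conditions). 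Interior and boundary elliptic, resp.\ Stokes, regularity applied to \eqref{eq:laplmu} and to the Stokes system, now with smooth right-hand sides and boundary data, then gives smoothness of $\mu^{\pm},\mathbf v^{\pm},p^{\pm}$ on $\overline{\Omega^{\pm}(t)}$. The main obstacle is the verification that $\mathcal C$ is genuinely of lower order uniformly in $t$ — that the derivatives gained from Stokes regularity more than compensate the one lost to the boundary trace — but once the $t$-uniform estimate~\eqref{eq:instoab} is at hand this reduces to bookkeeping, so the proof is essentially a matter of organizing the reduction and correctly invoking Theorems~\ref{Max-Reg}, \ref{stokesthe} and~\ref{thm:Perturbation}.
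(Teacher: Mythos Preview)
Your argument is correct and follows the same overall strategy as the paper: reduce the coupled system to a closed non-autonomous evolution equation for $h$ by slaving $\mu^{\pm}$ and $(\mathbf v^{\pm},p^{\pm})$ to $h$, split off the $h$-independent data into a modified right-hand side, identify the Stokes contribution as a lower-order perturbation of the operator from Theorem~\ref{Max-Reg}, and invoke Theorem~\ref{thm:Perturbation}.

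The one substantive difference is how you bound the Stokes perturbation $\mathcal C(t)$. You use the strong $H^{2}$-estimate~\eqref{eq:instoab} directly: the only nonzero datum for $\mathbf v_{h}^{\pm}$ is the stress jump, whose $H^{1/2}(\Gamma_{t})$-norm is controlled by $\|h\|_{H^{5/2}(\Sigma)}$, and then you take the trace. The paper instead tests the Stokes equations with $\mathbf v_{h}^{\pm}$ and uses Korn's inequality to obtain the energy estimate $\|\mathbf v_{h}^{\pm}\|_{H^{1}(\Omega^{\pm}(t))}\le C\|h\|_{H^{2}(\Sigma)}$, since in the pairing $\int_{\Gamma_{t}}(\cdots\Delta_{\Gamma}h\cdots)\cdot\mathbf v_{h}^{-}$ only the $L^{2}(\Gamma_{t})$-norm of $\Delta_{\Gamma}h$ is needed. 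This yields the sharper mapping property $\mathcal C(t)\in\mathcal L(H^{2}(\Sigma),H^{1/2}(\Sigma))$ rather than your $\mathcal L(H^{5/2}(\Sigma),H^{1/2}(\Sigma))$. Either bound suffices for Theorem~\ref{thm:Perturbation}, since both $H^{2}(\Sigma)$ and $H^{5/2}(\Sigma)$ are close to $H^{1/2}(\Sigma)$ compared with $H^{7/2}(\Sigma)$; your route is quicker and avoids the energy computation, while the paper's gains half a derivative that is not actually needed here. Your treatment of smoothness via time-differentiation and bootstrapping on the Sobolev scale is also more detailed than the paper's, which simply appeals to localization and difference quotients.
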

\begin{proof}
We show this by a perturbation argument. First of all note that we
may without loss of generality assume that $a_{1}$, $a_{3}$, $a_{4}$,
$\mathbf{a}_{1}$, $\mathbf{a}_{2}$, $\mathbf{a}_{5}$, $\mathbf{a}_{6}=0$
on their respective domains. The above system may be reduced to this
case by solving
\begin{align*}
\Delta\hat{\mu}^{\pm} & =a_{1} &  & \text{in }\Omega^{\pm}(t),\\
  \hat{\mu}^{\pm} & =a_{3} &  & \text{on }\Gamma_{t},\\
    \no\cdot \nabla \hat{\mu}^{-} & =a_{4} &  & \text{on }\Gmu{1},\\
  \hat{\mu}^{-} & =a_{4} &  & \text{on }\Gmu{2}
\end{align*}
with the help of standard elliptic theory and 
\begin{align*}
-\Delta\hat{\mathbf{v}}^{\pm}+\nabla\hat{p}^{\pm} & =\mathbf{a}_{1} &  & \text{in }\Omega^{\pm}(t),\\
\operatorname{div}\mathbf{\hat{v}}^{\pm} & =0 &  & \text{in }\Omega^{\pm}(t),\\
[\mathbf{\hat{\mathbf{v}}}] & =\mathbf{a}_{2} &  & \text{on }\Gamma_{t},\\
\left[2D_{s}\mathbf{\hat{v}}-p\right]\mathbf{n} & =\mathbf{a}_{5} &  & \text{on }\Gamma_{t},\\
B_j(\mathbf{\hat{v}}^{-},\hat{p}^{-}) & =\mathbf{a}_{6} &  & \text{on }\Gv{j}, j=1,2,3,
\end{align*}
with the help of Theorem \ref{stokesthe} and setting 
\[
\hat{g}=g-\tfrac{1}{2}X_{0}^{*}\left([\partial_{\mathbf{n}_{\Gamma_{t}}}\hat{\mu}]+(\hat{\mathbf{v}}^{+}+\hat{\mathbf{v}}^{-})\cdot\mathbf{n}_{\Gamma_{t}}\right).
\]
Now let $t\in\left[0,T\right]$, $h\in H^{\frac{7}{2}}(\Sigma)$
and let $\mathbf{v}_{h}^{\pm}\in H^{2}(\Omega^{\pm}(t))^d$,
$p_{h}^{\pm}\in H^{1}(\Omega^{\pm}(t))$ be
the solution to (\ref{eq:stokes})\textendash (\ref{eq:stokesoutbdry}).
Multiplying (\ref{eq:stokes}) by $\mathbf{v}_{h}^{\pm}$ and integrating
in $\Omega^{\pm}(t)$ together with integration by parts
and the consideration of the boundary values (\ref{eq:stokesbdry})
and (\ref{eq:stokesoutbdry}) allows us to deduce
\begin{align}
\int_{\Omega^{+}(t)} & 2|D_{s}\mathbf{v}_{h}^{+}|^{2}\sd x+\int_{\Omega^{-}(t)}2|D_{s}\mathbf{v}_{h}^{-}|^{2}\sd x+\sum_{j=2,3}\int_{\Gv{j}}\alpha_j|\mathbf{v}_{h}^{-}|^{2}\sd\mathcal{H}^{d-1}(s)\nonumber \\
 & =\int_{\Gamma_{t}}X_{0}^{*,-1}\big(\mathbf{a}_{3}h+\mathbf{a}_{4}\Delta_{\Gamma}h+a_{5}\nabla_{\Gamma}h\big)\cdot\mathbf{v}_{h}^{-}\sd\mathcal{H}^{d-1}(s).\label{eq:v+-test}
\end{align}
Hence, by \cite[Corollary 5.8]{korn}  and the continuity of the trace
we find
\begin{equation}
\Vert \mathbf{v}_{h}^{-}\Vert _{H^{1}(\Omega^{-}(t))}\leq C\Vert h\Vert _{H^{2}(\Sigma)}\label{eq:v-est}
\end{equation}
for $C$ independent of $h$ and $t$. \cite[Corollary~5.8]{korn}, 
also implies 
\[
\int_{\Omega^{+}(t)}2|D_{s}\mathbf{v}_{h}^{+}|^{2}\sd x+\int_{\Gamma_{t}}|\mathbf{v}_{h}^{+}|^{2}\sd\mathcal{H}^{d-1}(s)\geq C\Vert \mathbf{v}_{h}^{+}\Vert _{H^{1}\left(\Omega^{+}(t)\right)}^{2},
\]
 leading to 
\begin{equation}
\Vert \mathbf{v}_{h}^{+}\Vert _{H^{1}(\Omega^{-}(t))}\leq C\Vert h\Vert _{H^{2}(\Sigma)}\label{eq:v+est}
\end{equation}
due to $\mathbf{v}_{h}^{+}=\mathbf{v}_{h}^{-}$ on $\Gamma_{t}$,
(\ref{eq:v-est}) and (\ref{eq:v+-test}). Defining 
\[
\mathcal{B}(t)\colon H^{\frac{7}{2}}(\Sigma)\rightarrow H^{\frac{1}{2}}(\Sigma)\colon h\mapsto\mathcal{B}(t)h=\tfrac{1}{2}X_{0}^{*}\big((\mathbf{v}_{h}^{+}+\mathbf{v}_{h}^{-})\cdot\mathbf{n}_{\Gamma_{t}}\big),
\]
we may use (\ref{eq:v-est}) and (\ref{eq:v+est}) to confirm 
\[
\Vert \mathcal{B}(t)h\Vert _{H^{\frac{1}{2}}(\Sigma)}\le C\Vert h\Vert _{H^{2}(\Sigma)}
\]
for $C>0$ independent of $h$ and $t$. As $H^{\frac{7}{2}}(\Sigma)$
is dense in $H^{2}(\Sigma)$ we can extend $\mathcal{B}(t)$
to an operator $\mathcal{B}(t):H^{2}(\Sigma)\rightarrow H^{\frac{1}{2}}(\Sigma)$
and $H^{2}(\Sigma)$ is close to $H^{\frac{1}{2}}(\Sigma)$
compared with $H^{\frac{7}{2}}(\Sigma)$.

The existence of a unique solution $h\in X_{T}$ with the properties
stated in the theorem is now a consequence of Theorem~\ref{thm:Perturbation}.
Higher regularity may be shown by localization and e.g.\ the
usage of difference quotients.
\end{proof}

\section*{Acknowledgments}
 The authors acknowledge support by the SPP 1506 "Transport Processes
at Fluidic Interfaces" of the German Science Foundation (DFG) through the grant AB285/4-2. Moreover, we are grateful to the anonymous referee for the careful reading a previous version of the manuscript and many helpful comments.

\appendix
\section{Existence of a Pressure}
\begin{lemma}\label{lem:pressure}
  Let $F\in \{\bpsi\in H^1(\Omega)^d: \bpsi|_{\Gv{1}}=0, \no_{\partial\Omega}\cdot \bpsi|_{\Gv{2}}=0 \}\to \R$ be linear and bounded such that
  \begin{equation*}
    F(\bpsi)= 0 \quad \text{for all } \bpsi\in V(\Omega)=\{ \bpsi\in H^1(\Omega)^d: \operatorname{div} \bpsi=0, \bpsi|_{\Gv{1}}=0, \no_{\partial\Omega}\cdot \bpsi|_{\Gv{2}}=0\}.
  \end{equation*}
  Then there is a unique $p\in L^2(\Omega)$ with $\int_\Omega p\,dx =0$ if $\Gv{3}=\emptyset$ such that
   \begin{equation*}
    F(\bpsi)= -\int_\Omega p\operatorname{div}\bpsi \,dx \qquad \text{for all } \bpsi\in H^1(\Omega)^d\text{ with }\bpsi|_{\Gv{1}}=0, \no_{\partial\Omega}\cdot \bpsi|_{\Gv{2}}=0.
  \end{equation*}
\end{lemma}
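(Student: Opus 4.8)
The plan is to identify the pressure as the functional dual to the divergence, and to reduce the whole statement to the construction of a bounded linear right inverse of the divergence on
\[
W:=\{\bpsi\in H^1(\Omega)^d:\ \bpsi|_{\Gv{1}}=0,\ \no_{\partial\Omega}\cdot\bpsi|_{\Gv{2}}=0\},
\]
the space on which $F$ is defined. Put $R:=L^2(\Omega)$ if $\Gv{3}\neq\emptyset$ and $R:=\{q\in L^2(\Omega):\int_\Omega q\,dx=0\}$ if $\Gv{3}=\emptyset$. When $\Gv{3}=\emptyset$ the divergence theorem gives, for $\bpsi\in W$, that $\int_\Omega\operatorname{div}\bpsi\,dx=\int_{\partial\Omega}\no_{\partial\Omega}\cdot\bpsi\,d\mathcal H^{d-1}=0$, since $\bpsi=0$ on $\Gv{1}$ and $\no_{\partial\Omega}\cdot\bpsi=0$ on $\Gv{2}$; thus in either case $\operatorname{div}(W)\subseteq R$. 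The key step I would carry out first is the reverse inclusion in quantitative form: there is a bounded linear operator $\mathcal R\colon R\to W$ with $\operatorname{div}\,\mathcal R f=f$ for all $f\in R$.

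For the construction of $\mathcal R$ I would distinguish two cases. If $\Gv{3}=\emptyset$, the Bogovskii operator on $\Omega$ (\cite[Theorem III.3.3]{galdi}) is a bounded linear right inverse of $\operatorname{div}$ mapping $R$ into $H^1_0(\Omega)^d\subseteq W$, and nothing more is needed. If $\Gv{3}\neq\emptyset$, then $\Gv{3}$ is closed and, being the complement in $\partial\Omega$ of the closed set $\Gv{1}\cup\Gv{2}$, also open in $\partial\Omega$, hence a union of connected components of $\partial\Omega$ at positive distance from $\Gv{1}\cup\Gv{2}$; consequently, fattening $\Omega$ across $\Gv{3}$ by a thin smooth collar yields a bounded smooth domain $\tilde\Omega\supset\Omega$ with $\Gv{1}\cup\Gv{2}\subseteq\partial\tilde\Omega$, with $\tilde\Omega$ and $\Omega$ coinciding in a neighbourhood of $\Gv{1}\cup\Gv{2}$, and with $\tilde\Omega\setminus\overline\Omega\neq\emptyset$. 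Fixing $\theta\in C_c^\infty(\tilde\Omega\setminus\overline\Omega)$ with $\int\theta\,dx=1$, I would, given $f\in L^2(\Omega)$, extend $f$ by zero to $\tilde f\in L^2(\tilde\Omega)$, apply the Bogovskii operator on $\tilde\Omega$ to the mean-zero function $\tilde f-(\int_\Omega f\,dx)\theta$ to obtain $\tilde\bpsi\in H^1_0(\tilde\Omega)^d$ with $\operatorname{div}\tilde\bpsi=\tilde f-(\int_\Omega f\,dx)\theta$ and $\|\tilde\bpsi\|_{H^1(\tilde\Omega)}\le C\|f\|_{L^2(\Omega)}$, and set $\mathcal R f:=\tilde\bpsi|_\Omega$. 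Since $\operatorname{supp}\theta\cap\Omega=\emptyset$ one has $\operatorname{div}(\mathcal R f)=f$ in $\Omega$, and since $\tilde\bpsi$ has vanishing trace on $\partial\tilde\Omega$ while $\Omega$ and $\tilde\Omega$ agree near $\Gv{1}\cup\Gv{2}$, the trace of $\mathcal R f$ vanishes on $\Gv{1}\cup\Gv{2}$, so $\mathcal R f\in W$. This domain-surgery-plus-Bogovskii step, which makes the right inverse compatible with the mixed boundary conditions, is the only genuine obstacle; the rest is soft functional analysis.

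Finally I would use $\mathcal R$ as follows. Since $F$ is linear, bounded, and vanishes on $V(\Omega)=\ker(\operatorname{div}|_W)$, setting $G(f):=F(\bpsi)$ for any $\bpsi\in W$ with $\operatorname{div}\bpsi=f$ gives a well-defined linear functional on $R$ (two such $\bpsi$ differ by an element of $V(\Omega)$), and $|G(f)|=|F(\mathcal R f)|\le\|F\|\,\|\mathcal R\|\,\|f\|_{L^2(\Omega)}$, so $G\in R'$ and $F=G\circ\operatorname{div}$ on $W$. By the Riesz representation theorem in the Hilbert space $R$ there is $p\in R$ with $G(f)=-\int_\Omega pf\,dx$ for all $f\in R$, and $p$ automatically has zero mean when $\Gv{3}=\emptyset$; hence $F(\bpsi)=G(\operatorname{div}\bpsi)=-\int_\Omega p\operatorname{div}\bpsi\,dx$ for all $\bpsi\in W$. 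Uniqueness follows because $\int_\Omega(p_1-p_2)\operatorname{div}\bpsi\,dx=0$ for all $\bpsi\in W$ together with $\operatorname{div}(W)=R$ forces $p_1-p_2$ to be orthogonal to $R$ in $L^2(\Omega)$, so $p_1=p_2$ when $R=L^2(\Omega)$, and $p_1-p_2$ is constant — thus zero under the zero-mean normalization — when $R=\{q\in L^2(\Omega):\int_\Omega q\,dx=0\}$.
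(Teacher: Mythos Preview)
Your argument is correct. Both you and the paper reduce the lemma to surjectivity of $\operatorname{div}\colon W\to R$; the paper then invokes the closed range theorem while you factor $F$ through the quotient and apply Riesz directly, which amounts to the same thing. The genuine difference is in how surjectivity is established when $\Gv{3}\neq\emptyset$. The paper solves the mixed Laplace problem $\Delta q=g$ with $q=0$ on $\Gv{3}$ and $\no_{\partial\Omega}\cdot\nabla q=0$ on $\Gv{1}\cup\Gv{2}$, then corrects the boundary trace of $\nabla q$ by a divergence-free Stokes field $\mathbf{w}$ with prescribed Dirichlet data, and sets $\bpsi=\mathbf{w}-\nabla q$; this draws on elliptic and Stokes solvability (and tacitly uses $H^2$-regularity of $q$, which is available precisely because the $\Gv{j}$ are disjoint boundary components so no mixed-condition corners occur). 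Your domain-surgery route---extending $\Omega$ across the closed-and-open set $\Gv{3}$ and applying Bogovskii on the enlarged domain to a mean-corrected extension---is more elementary in that it uses only Bogovskii and the observation that $\Gv{3}$ is a union of boundary components, at the cost of a small geometric construction. Both approaches yield a bounded linear right inverse; yours is arguably cleaner here, while the paper's stays within the PDE toolbox already in use elsewhere in the article.
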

\begin{proof}
  We will apply the closed range theorem. To this end let
  \begin{align*}
    X&= \{\bpsi\in H^1(\Omega)^d: \bpsi|_{\Gv{1}}=0, \no_{\partial\Omega}\cdot \bpsi|_{\Gv{2}}=0 \},\\
    Y&= \left\{g\in L^2(\Omega): \int_\Omega g(x)\,dx =0 \text{ if }\Gv{3}=\emptyset\right\}
  \end{align*}
  and consider
  \begin{equation*}
    T \colon X\to Y \colon \bpsi\mapsto -\operatorname{div} \bpsi.
  \end{equation*}
  Then $T$ is onto, which can be seen as follows: Let $g\in Y$.

  If $\Gv{3}\neq \emptyset$, then there is a unique solution $q\in H^1(\Omega)$ of
  \begin{alignat*}{2}
    \Delta q &= g &\qquad & \text{in }\Omega,\\
    q|_{\Gv{3}} &= 0 && \text{on }\Gv{3},\\
    \no_{\partial\Omega} \cdot \nabla q|_{\Gv{1}\cup\Gv{2}}& =0 &&\text{on }\Gv{1}\cup \Gv{2}.
  \end{alignat*}
  Moreover, using the solvability of the stationary Stokes equation with non-homogenoues Dirichlet boundary conditions, we find some $\mathbf{w}\in H^1(\Omega)^d$ with $\operatorname{div} \mathbf{w}=0$ and
  \begin{equation*}
    \mathbf{w}|_{\Gv{1}} = \nabla q|_{\Gv{1}\cup \Gv{2}},\qquad \mathbf{w}|_{\Gv{3}}=0.
  \end{equation*}
  Then $\bpsi = \mathbf{w}-\nabla q\in X$ with $-\operatorname{div}\bpsi= g$.

  If $\Gv{3}= \emptyset$, we have $\int_\Omega g(x)\, dx=0$ and  can use the well-known Bogovskii operator to obtain some $\bpsi\in H^1_0(\Omega)$ with $-\operatorname{div} \bpsi=g$.

  Now the closed range theorem implies that $T' \colon Y'\to X'$ is injective and
  \begin{equation*}
    \mathcal{R}(T') =\mathcal{N}(T)^\circ=\{F\in X':  F(\bpsi) =0 \text{ for all }\bpsi\in V(\Omega)\}.
  \end{equation*}
  This proves the statement of the lemma.
\end{proof}



\providecommand{\bysame}{\leavevmode\hbox to3em{\hrulefill}\thinspace}
\providecommand{\MR}{\relax\ifhmode\unskip\space\fi MR }
\providecommand{\MRhref}[2]{%
  \href{http://www.ams.org/mathscinet-getitem?mr=#1}{#2}
}
\providecommand{\href}[2]{#2}

\end{document}